\newcounter{dummy} \numberwithin{dummy}{section}
\newtheorem{theorem}[dummy]{Theorem}
\newtheorem{corollary}[dummy]{Corollary}
\newtheorem{lemma}[dummy]{Lemma}
\newtheorem{definition}[dummy]{Definition}
\newtheorem{proposition}[dummy]{Proposition}
\theoremstyle{remark}
\newtheorem{remark}[dummy]{Remark}
\newtheorem{example}[dummy]{Example}
\newcommand{\calA}{\mathcal{A}}
\newcommand{\calI}{\mathcal{I}}
\newcommand{\calP}{\mathcal{P}}
\newcommand{\calT}{\mathcal{T}}
\newcommand{\calC}{\mathcal{C}}
\newcommand{\calE}{\mathcal{E}}
\DeclareMathOperator{\id}{id}
\DeclareMathOperator{\pr}{pr}
\DeclareMathOperator{\rank}{rank}
\DeclareMathOperator{\image}{image}
\DeclareMathOperator{\spn}{span}
\DeclareMathOperator{\frakg}{\mathfrak{g}}
\DeclareMathOperator{\SU}{SU}
\DeclareMathOperator{\SL}{SL}
\DeclareMathOperator{\gr}{gr}
\numberwithin{equation}{section}
\title[Filtered complexes and cohomologically equivalent subcomplexes]{Filtered complexes and cohomologically equivalent subcomplexes}
\author[E.~Grong and F.~Tripaldi]{Erlend Grong and Francesca Tripaldi}
\thanks{The first author is supported by the grant GeoProCo from the Trond Mohn Foundation - Grant TMS2021STG02 (GeoProCo). The second author was partially supported by the Swiss National Science Foundation Grant nr 200020-191978, \textit{Analytic and geometric structures in singular spaces} and would also like to thank the Centro di
Ricerca Matematica Ennio De Giorgi and the Scuola Normale Superiore for the hospitality and support.
}
\subjclass[2020]{58G05, 58A10, 53C15, 58A12}
\address{University of Bergen, Department of Mathematics, P.O.~Box 7803, 5020 Bergen, Norway}
\email{erlend.grong@uib.com}
\address{University of Leeds,
Department of Pure Mathematics, Leeds, UK}
\email{f.tripaldi@leeds.ac.uk}
\keywords{Filtered differential complex, base differential, Rumin complex, spectral sequences.}
\begin{document}

\begin{abstract}
Inspired by Rumin's work on a subcomplex in sub-Riemannian manifolds that is cohomologically equivalent to the de Rham complex, we present a more general construction that produces subcomplexes from any filtered cochain complex of finite depth and still computes the cohomology of the original filtered complex. A priori, these subcomplexes depend not only on the filtration itself, but also on the choice of additional structures. However, we show that the construction depends only on the given filtration up to isomorphism. Finally, we show how such subcomplexes relate to spectral sequences, a cohomological machinery that arises naturally when considering a filtered complex. 
\end{abstract}

\maketitle

\section{Introduction} \label{sec:Introduction}
Filtered complexes show up all over mathematics, from differential forms \cite{Hat60,BotTu82}, Lie algebra cohomology \cite{HoSe53}, algebraic geometry \cite{Abd00}, K-theory \cite{Bloch77}, filtered Riemannian manifolds \cite{Sak78,Alv89}, sub-Riemannian manifolds \cite{Rumin1,Rumin2,Rumin4}, BGG-sequences \cite{CSS01,dave2020heat,DaHa22} and parabolic geometries \cite{CaSl09} just to name a few. In 1994, Rumin \cite{Rumin1,Pan93} pointed out that the de Rham complex $(\Omega^\bullet,d)$ of a contact manifold contains a subcomplex that is isomorphic at the level of cohomology. The key idea behind this construction is the introduction of a tensorial map $d_0\colon\Omega^\bullet\to\Omega^\bullet$ such that $d-d_0$ becomes a nilpotent operator. This idea has been further developed in \cite{Rumin2,Rumin3,Rumin4} into a complex that is well defined on any sub-Riemannian manifold, once some preliminary choices for the grading and metric are made. Such subcomplex, which is now known as the Rumin complex, has proven particularly useful in the sub-Riemannian context for the study of holonomy of partial connections \cite{FGR97,GrPa18,CGJK19}, analytic torsions \cite{RuSe12,Kit20,Hall22}, invariants of PDEs on differential forms \cite{Rumin4,AlQu22,dave2020heat}, sub-Laplacians and geometric invariants on CR-manifold \cite{AGL02,BHR07}, and Sobolev inequalities and $L^{p,q}$-cohomologies \cite{BFT06,PaRu18,PaTr19,BTT22,BFP22}.

In the current paper, we demonstrate how the Rumin complex appears as a specific instance of a more general method of generating subcomplexes that can be applied to any filtered cochain complex $(K,d)$. Assume that the filtration $K= K_0 \supset K_1 \supset K_2 \supset \cdots \supset K_S \supset K_{S+1} =0$ has finite depth $S$ such that $dK_j \subset K_j$, $j =0, \dots, S$. We define the graded complex
\begin{equation} \label{DefGr}
\gr(K) = K_0/K_1 \oplus K_1 /K_2 \oplus \cdots \oplus K_S/K_{S+1} = \{ \oplus_j(\alpha_j + K_{j+1}) \, :\, \alpha_j \in K_j, j=0, \dots, S \}
\end{equation}
with differential $d_{\gr} \oplus_j (\alpha_j + K_{j+1}) = \oplus_j (d\alpha_j + K_{j+1})$. \emph{A splitting} of this filtration is a direct sum $K = K_{(0)} \oplus K_{(1)} \oplus \cdots \oplus K_{(S)}$ such that $K_{j} = K_{(j)} \oplus K_j$. Such a splitting can equivalently be described as an invertible linear map $\phi: K \to \gr(K)$ such that $\phi(K_j) = K_j/K_{j+1} \oplus \cdots \oplus K_S/K_{S+1} \subset \gr(K)$ such that the inverse image of $K_j/K_{j+1}$ under $\phi$ is a complement to $K_{j+1}$ of $K_j$. The correspondence between this structure are such that if $\alpha = \sum_{j=0}^S \alpha_{(j)}$ with $\alpha_{(j)} \in K_{(j)}$, then $\phi(\alpha) = \oplus_j (\alpha_{(j)}+K_j)$.
The objective of this paper is to find a differential $d_H: H(\gr(K))  \to H(\gr(K))$ on the cohomology group $H(\gr(K)) =(\ker d_{\gr})/(\image d_{\gr})$ such that closed elements are exactly the ones that correspond to a closed form in $K$ under some splitting. Applications of such a differential are found in defining closed partial forms and flat partial connections, see Section~\ref{sec:Examples}. We will show that there always exists such a differential, and while the definition is not canonical, we will show that we obtain isomorphic differential complexes for any of these extra choices made.
\begin{theorem} \label{th:main}
Let $(K,d)$ be a filtered cochain complex of finite depth with filtration $K = K_0 \supset K_1 \supset K_2 \supset \cdots \supset K_S \supset K_{S+1} =0$ and $dK_j\subset K_j$ for any $j=0,\ldots,S$. 
\begin{enumerate}[\rm (a)]
\item There exists a direct sum decomposition
$$K = \calT \oplus d\calT \oplus \calP,$$
such that $(\calP,d\vert_{\mathcal{P}})$ is a subcomplex, $dK_j \subset d\calT + K_{j+1}$ for any $j = 0, \ldots, S$, and the map
$$(\calT \cap K_j)/K_{j+1} \to K_j/K_{j+1}\ , \quad \alpha \bmod K_{j+1} \mapsto d\alpha \bmod K_{j+1} \ $$
is injective. Furthermore, $(\calP, d|_{\calP})$ is isomorphic to $(K,d)$ at the cohomology level. 
Finally, if $K = \tilde \calT \oplus d\tilde \calT \oplus \tilde \calP$ is another such decomposition, then $(\calP, d|_{\calP})$ and $(\tilde \calP, d|_{\tilde \calP})$ are isomorphic as cochain complexes.
\item Let $\phi:K \to \gr(K)$ be a splitting and $K = \calT \oplus d \calT \oplus \calP$ as in \emph{(a)}. Then there exists a unique linear map $I_\phi: \calP \to \calT$ such that $d_{\gr}\phi(\alpha - I_\phi \alpha) =0$ for any $\alpha \in \calP$. Furthermore, the map
$$\alpha \in \calP \to [\phi(\alpha - I_\phi \alpha)]\in H(\gr(K))$$
is a bijective map and for any $\alpha \in \calP$, there exists a closed/exact $\beta \in K$ relative to $d$ such that $[\phi(\alpha) - I_\phi \alpha)] = [\phi(\beta)]$ if and only if $\alpha$ is closed/exact.
\end{enumerate}
\end{theorem}
From the above theorem, we can define $d_H: H(\gr(K)) \to H(\gr(K))$ by $d_H[\phi(\alpha - I_\phi \alpha)] = [\phi(d\alpha - I_\phi d\alpha)]$, identifying $H(\gr(K))$ with $\calP$, and while this differential is not canonical, we get isomorphic complexes relative to each such choice. In order to produce the subcomplex $\calP$, one needs to make certain choices related to the filtration itself, such as the operator $d_0\colon K\to K$ which we call a base differential of $(K,d)$, along with the subspaces $\calT \subset \calC$, which will be referred to as the base transversal and cotransversal of $d_0$. We will not go much into how these structures can be constructed, rather showing if that these can be found, then any such choice will give equivalent results. See some special cases of construction at the end of Section~\ref{subsection Rumin} and Section~\ref{sec:Examples}. We emphasize, however, that once the base differential, transversal and cotransveral have been constructed, the formulas are quite explicit.

Another cohomological machinery that arises when considering filtered complexes is that of spectral sequences. In this paper, we prove that a filtered cochain complex $(K,d)$ of finite depth is a multicomplex and, as such, it has further properties that allow us to describe each page of their spectral sequence explicitly. In this way, we are also able to clearly state the relationship between the subcomplexes $(\mathcal{P},d\vert_\mathcal{P})$ that we introduce and the spectral sequence differential operators and quotients. In this sense, one can think of such subcomplexes as arising ``naturally'' from the given filtration.

The structure of the paper is as follows. In Section~\ref{sec:Filtered} we give the general definition of a base differential $d_0\colon K\to K$ of a filtered complex $(K,d)$, and its relative base transversals and cotransversals $\mathcal{T}\subset\mathcal{C}$, together with some of their basic properties. We then present the explicit construction of the subcomplex $(\mathcal{P},d\vert_{\mathcal{P}})$, show how it relates to the Rumin complex, and then prove Theorem~\ref{th:main}. In Section~\ref{sec:Examples} we produce several different examples of filtered cochain complexes of finite depth, and explain how to identify their base differential and the relative base complex $(\mathcal{P},d\vert_\mathcal{P})$. In Section~\ref{sec:SpectralSeq} we focus on the construction of the spectral sequence of a given filtered complex $(K,d)$ of finite depth, given that such $(K,d)$ turns out to be a multicomplex. Finally, in Section~\ref{last section}, we present the explicit computations behind each page of the spectral sequence associated to a nonnegative grading on a 3-dimensional filtered Lie group.

\section{Subcomplexes of general filtered complexes} \label{sec:Filtered}
\subsection{Base differential of filtered complexes}
In this section, we present a general approach to finding subcomplexes of any filtered complex, which are isomorphic to the original complex at the level of cohomology. Our ideas are built on the work of graded forms presented by Rumin in \cite{Rumin3} and \cite{Rumin4}, and later used in \cite{DaHa22}. 

Let $(K,d)$ be a cochain complex with differential operator $d$. Assume that we have a sequence of sub-complexes of finite depth, that is
\begin{equation} \label{filteredK} K = K_0 \supset K_1 \supset K_2 \supset \cdots \supset K_S \supset K_{S+1} =0\text{ with }dK_j\subset K_j\,.\end{equation}
Define the graded complex as in \eqref{DefGr} with differential $d_{\gr}$.
We introduce the following definitions.
\begin{definition} \label{def:base}
We say that a linear operator $d_0:K \to K$ is \emph{a base differential} of the filtered complex $(K,d)$ if for any $j =0, \dots, S$,
\begin{enumerate}[\rm (i)]
\item $d^2_0 = 0$;
\item if $\alpha \in K_j$, then $d \alpha = d_0 \alpha \bmod K_{j+1}$;
\item $\ker d_0|_{K_j} \bmod K_{j+1} = \ker d_{\gr} |_{K_j/K_{j+1}}$.
\end{enumerate}
\end{definition}
Notice from the Definition~\ref{def:base} that $(K_j, d_0)$ is a differential complex for $j =0, 1, \dots, S$.
\begin{example} \label{ex:phiTod0}
Let $\phi: K \to \gr(K)$ be a splitting as described in Section~\ref{sec:Introduction}. We can then define corresponding base differential $d_0$ as follows. If $K = K_{(0)} \oplus \cdots \oplus K_{(S)}$ is the splitting, seen as a grading on $K$, then by definition $d = d_{(0)} + d_{(1)} + \cdots + d_{(S)}$ only has components of nonnegative degree. We can then define $d_0 = d_{(0)}$ which will satisfy all the conditions of Definition~\ref{def:base}.
\end{example}
\begin{definition}
Let $d_0:K \to K$ be a base differential of $(K,d)$.
\begin{enumerate}[\rm (i)]
\item We say that a subspace $\calT \subseteq K$ is a base transversal for $d_0$ if it satisfies for any $j= 0,1, \dots, S,$
$$K_j = \ker d_0|_{K_j} \oplus (\calT \cap K_j).$$
\item We say that a subspace $\calC \subseteq K$ is a base cotransversal for $d_0$ with transversal $\calT$ if it satisfies
$$K = \image d_0 \oplus \calC\quad\text{and} \quad \calT \subset \calC\ .$$
\end{enumerate}
\end{definition}

Given a choice for a base differential $d_0$, one can always construct a corresponding base transversal $\mathcal{T}$ and cotransversal $\mathcal{C}$.

\begin{proposition} \label{prop:existence}
Any filtered complex of finite depth $(K,d)$ admits a base differential $d_0\colon K \to K$ together with a base cotransversal and transversal $\calT \subset \calC$.
\end{proposition}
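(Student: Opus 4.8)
The plan is to extract all of the required data from a single choice, namely a vector-space splitting of the filtration. Since the depth is finite I may pick subspaces $V_0,\dots,V_S$ with $K_j = V_j \oplus K_{j+1}$ for every $j$, so that $K = \bigoplus_{j=0}^S V_j$ and $K_j = \bigoplus_{i\ge j} V_i$; let $\pi_j\colon K\to V_j$ be the associated projections. I then take the base differential to be the ``degree-zero part'' of $d$: for $\alpha\in V_j$ put $d_0\alpha := \pi_j(d\alpha)$, extended linearly. Because $d\alpha\in K_j$ when $\alpha\in V_j$, we get $\pi_i(d\alpha)=0$ for $i<j$, so $d_0 V_j\subseteq V_j$ and $N := d-d_0$ satisfies $NK_j\subseteq K_{j+1}$, which is precisely axiom (ii) of a base differential (and shows $N$ is nilpotent).

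For axiom (i) I would expand $0 = d^2 = d_0^2 + (d_0N + Nd_0) + N^2$ and compare filtration levels: $d_0$ preserves each $V_j$ while $d_0N+Nd_0$ and $N^2$ strictly raise the filtration level, so the degree-zero component of the identity forces $d_0^2=0$. Equivalently, for $\alpha\in V_j$ one has $d(d_0\alpha) = -d(N\alpha)\in dK_{j+1}\subseteq K_{j+1}$, hence $d_0(d_0\alpha) = \pi_j(d(d_0\alpha)) = 0$. In particular $d_0^2=0$ yields $\image d_0\subseteq\ker d_0$, which is the one structural fact that makes the rest go smoothly.

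Since $d_0$ is homogeneous for the grading, $\ker d_0 = \bigoplus_j(\ker d_0\cap V_j)$ and $\image d_0 = \bigoplus_j d_0(V_j)$ with $d_0(V_j)\subseteq\ker d_0\cap V_j$. So inside each $V_j$ I choose a complement $H_j$ of $d_0(V_j)$ in $\ker d_0\cap V_j$ and a complement $T_j$ of $\ker d_0\cap V_j$ in $V_j$, giving $V_j = d_0(V_j)\oplus H_j\oplus T_j$. Setting $\calT := \bigoplus_j T_j$ and $\calC := \bigoplus_j(H_j\oplus T_j)$ produces $K = \ker d_0\oplus\calT$, $K = \image d_0\oplus\calC$ and $\calT\subseteq\calC$. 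To finish I check the filtration condition on $\calT$: it is graded, $\calT\cap K_j = \bigoplus_{i\ge j}T_i$, and since $d_0$ neither mixes the graded pieces nor annihilates a nonzero element of any $T_i$ (as $T_i\cap\ker d_0 = 0$), a nonzero $\alpha = \sum_{i\ge j}\alpha_i$ with $\alpha_i\in T_i$ has $d_0\alpha\ne 0$ — its lowest nonzero homogeneous component is — while a $\calT$-element with a nonzero component in some $T_i$, $i<j$, is not in $K_j$; this pins down $\calT\cap K_j$ exactly as in the definition of a base transversal. The only thing requiring real attention is keeping the grading bookkeeping consistent; once $d_0$ is the degree-zero part of $d$ and $\calT,\calC$ are chosen graded, every axiom collapses to the two observations $\image d_0\subseteq\ker d_0$ and ``$d_0$ respects and is injective on the homogeneous pieces of $\calT$'', so I foresee no serious obstacle.
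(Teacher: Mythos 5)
Your proposal is correct and follows essentially the same route as the paper: split the filtration as $K_j=V_j\oplus K_{j+1}$ (the paper's $K_{(j)}$), take $d_0$ to be the degree-zero component of $d$, extract $d_0^2=0$ from the graded expansion of $d^2=0$, and build $\calT$ and $\calC$ from gradewise complements of $\ker d_0\cap V_j$ and of $d_0(V_j)$. Your explicit verification of the condition $\calT\cap K_j=\bigoplus_{i\ge j}T_i$ is a welcome extra detail that the paper leaves implicit.
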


\begin{proof}
We can always construct a splitting $\phi: K \to \gr(K)$ with choices of complements, also giving us a base differential $d_0$ as in Example~\ref{ex:phiTod0}. Since $d_0 K_{(j)}  \subset K_{(j)}$, we can find a complement $\calT_{(j)}$ to $\ker d_0\cap K_{(j)}$ in $K_{(j)}$, as well as a complement $\calA_{(j)}$ to $d_0K_{(j)}$ in $\ker d_0\cap K_{(j)}$. The subspaces of $K$ given by
\begin{equation} \label{calTsplitt}\calT = \oplus_{j=0}^S \calT_{(j)}\ , \ \calC = \calT \oplus (\oplus_{j=0}^S \calA_{(j)})\ \text{ with }\ \mathcal{T}\subset\mathcal{C} \end{equation}
will be respectively a base transversal and base cotransversal to the base differential $d_0$.
\end{proof}

Once we have a base differential $d_0$ and a choice for $\calT \subset \calC$, we can define a partial inverse  of the base differential as follows.
\begin{definition}\label{d_0 inv} Let $d_0\colon K\to K$ be a base differential of $(K,d)$, and let $\mathcal{T}\subset\mathcal{C}$ be a base transversal and cotransversal for $d_0$. Then we can define the \emph{inverse} map $d_0^{-1}\colon K\to K$ of $d_0$ as follows
\begin{enumerate}[\rm (a)]
\item $\ker d^{-1}_0 = \calC$, 
\item if $\alpha \in \image d_0$, then $d_0^{-1} \alpha$ is the unique element in $\calT$ mapped to $\alpha$ by $d_0$.
\end{enumerate}
\end{definition}
By requiring $\calT \subset \calC$, it follows that $(d_0^{-1})^2 = 0$. We also need the following observation.
\begin{lemma}
For any $j = 0, \dots, S$, we have $d_0^{-1} K_j \subset K_j$.
\end{lemma}
\begin{proof}
Obviously $d_0^{-1} K_0 \subset K_0 = K$. For the remaining cases let $0 \leq i < j \leq S+1$ and  $\alpha \in (\calT \cap K_i)$ be such that $d_0 \alpha \in K_j \subset K_{i+1}$. Then $\alpha +K_{i+1} \in \ker d_{\gr}|_{K_i/K_{i+1}}$. It follows that for some $\beta \in K_{i+1}$, $\alpha +\beta \in \ker d_0|_{K_i}$. Because of the decomposition $K_{i+1} = \ker d_0|_{K_{i+1}} \oplus (\calT \cap K_{i+1})$, we can write $\beta = \beta_0 + \beta_1$ with $\beta_0 \in  \ker d_0|_{K_{i+1}}$, $\beta_1 \in(\calT \cap K_{i+1})$ which in particular means that $d_0 \beta = d_0 \beta_1$.

It then follows that $\alpha + \beta_1 \in \ker d_0|_{K_i}$. Using the decomposition $K_{i} = \ker d_0|_{K_{i}} \oplus (\calT \cap K_{i})$ and the fact that $\alpha, \beta \in (\calT \cap K_{i})$, it follows that $\alpha = -\beta \in K_{i+1}$. Iterating this argument it follows that $\alpha \in K_j$, which proves our statement.
\end{proof}
With these definitions in place, we are ready to produce the subcomplex $(\mathcal{P},d\vert_{\mathcal{P}})$ corresponding to the base differential $d_0$ and the relative base transversal and cotransversal $\mathcal{T}\subset\mathcal{C}$.

We introduce the operator
\begin{equation} \label{Poperator} P = \id - d_0^{-1} d -  dd_0^{-1}.\end{equation}
Given $S$ as in \eqref{filteredK}, we introduce the notation $P^{\infty} := P^{S}$.

\begin{definition}
    Given a base differential $d_0\colon K\to K$ of $(K,d)$ with base transversal and cotransversal $\mathcal{T}\subset\mathcal{C}$, we define $\mathcal{P}\subset K$ as
    \begin{align*}
        \mathcal{P}:=\image P^\infty\subset K\,.
    \end{align*}
    Moreover, since $dP=Pd$, we have that $(\calP,d)$ is a subcomplex of $(K,d)$ and we will call this \emph{the base subcomplex} of $d_0$.
\end{definition}
Since the construction of the operator $P$ depends not only on the base differential $d_0$, but also on the choice of base transversal and cotransversal for $d_0$, one should write $\mathcal{P}=\mathcal{P}_{d_0,\mathcal{T}\subset\mathcal{C}}$. However, in order to avoid cumbersome notation, once the map $d_0\colon K\to K$ and the subspaces $\mathcal{T}\subset\mathcal{C}$ have been fixed, we will simply write $\mathcal{P}=\image P^\infty\subset K$.

\begin{theorem} \label{th:SetUp} Let $(\mathcal{P},d)$ be the base subcomplex of $d_0\colon K\to K$ with transversal and cotransversal $\mathcal{T}\subset\mathcal{C}$.
\begin{enumerate}[\rm (a)]

\item If $\alpha\in K$ is a closed form, then $P^\infty \alpha$ is in the same cohomology class. In particular,
$$P^\infty: K \to \calP$$
is an isomorphism at the cohomology level.
\item The surjection between the two complexes 
$$K \to K/(\calT + d\calT)$$
is an isomorphism at the cohomology level. Furthermore, one can write
$$K = \calT \oplus d\calT \oplus \calP\  \  \text{with} \  \ker P^\infty = \calT \oplus d \calT,$$
so the map
$$\alpha + \calT + d\calT \longmapsto P^\infty(\alpha)$$
is a well-defined isomorphism between the complexes $\big(K/(\calT+ d\calT),d\big)$ and $(\calP,d)$.
\item One can rewrite $\mathcal{P}\subset K$ as
\begin{align*}
    \calP &= \{ \alpha \in K \, : \, P(\alpha) = \alpha \} = \ker (dd_0^{-1} + d_0^{-1} d) = \ker d_0^{-1} \cap \ker (d_0^{-1} d) = \{ \alpha \in \calC \, : \, d \alpha  \in \calC \}\ .
\end{align*}
In particular, $P^k = P^\infty$ for any $k \geq S$.  
\end{enumerate}
\end{theorem}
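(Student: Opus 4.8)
The plan is to treat $P$ as a perturbation of a purely algebraic ``harmonic projection'' and to extract all three statements from the finiteness of the depth. Write $h:=d_0^{-1}$ and $\delta:=d-d_0$. The first step is to record the identities valid for any admissible pair $\calT\subset\calC$: $d_0hd_0=d_0$, $hd_0h=h$, $h^2=0=d_0^2$; consequently $q:=hd_0+d_0h$ and $\pi_0:=\id-q$ are complementary idempotents, $q$ the projection onto $\image d_0\oplus\calT$ and $\pi_0$ the projection onto $\calA:=\ker d_0\cap\calC$, with $\pi_0h=h\pi_0=\pi_0d_0=d_0\pi_0=0$. A short computation from $h^2=d^2=0$ gives $Ph=hP$ (and $Pd=dP$, as in the text), so $(\calP,d)$ is genuinely a subcomplex and every $P^k$ is a chain map. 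On the filtration side, the base transversal axiom is precisely what makes $hK_j\subseteq K_j$; combined with $d_0K_j,dK_j\subseteq K_j$ it yields $\delta K_j\subseteq K_{j+1}$, hence $R:=h\delta+\delta h$ maps $K_j$ into $K_{j+1}$, and we rewrite $P=\pi_0-R$.

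The core of the argument is that, because $R$ strictly lowers the filtration while the depth is finite, the powers $P^k$ become constant, equal to an idempotent $P^\infty$ with $PP^\infty=P^\infty$. I would prove this by passing to block form relative to $K=\calA\oplus\calT\oplus\image d_0$: one computes $P|_\calA=\id-h\delta$, $P|_\calT=-h\delta|_\calT$, $P|_{\image d_0}=-R|_{\image d_0}$, so the only block of $P$ that does not strictly lower the filtration is the identity $\calA\to\calA$, while $\calT$ maps into $\calT$ only and $\image d_0$ maps everywhere, all strictly lowering; expanding $P^k$ and discarding the words with too many strictly-lowering factors against $K_{S+1}=0$ gives the stabilisation and pins down $P^\infty=P^S$, which is the assertion ``$P^k=P^\infty$ for $k\ge S$'' in (c). Granting idempotency of $P^\infty$, one gets $\calP=\image P^\infty=\{\alpha:P^\infty\alpha=\alpha\}=\{\alpha:P\alpha=\alpha\}$, the first equality of (c); the remaining reformulations are formal, since $P\alpha=\alpha\Leftrightarrow(hd+dh)\alpha=0$, since $\ker h=\calC$ makes $\ker h\cap\ker(hd)=\{\alpha\in\calC:d\alpha\in\calC\}$ tautological, and since the only inclusion with content, $\ker(hd+dh)\subseteq\ker h$, is obtained by applying $h$ to $(hd+dh)\alpha=0$, using $hd_0h=h$ to get $h\alpha=-(h\delta)h\alpha$, and iterating until the iterate lands in $K_{S+1}=0$.

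The same block picture identifies $\ker P^\infty$. To leading order $\ker P\supseteq\calT\oplus\image d_0$, and a downward induction over the filtration upgrades this to the exact equality $\ker P^\infty=\calT\oplus d\calT$; the inductive step uses that $\alpha\mapsto d\alpha$ induces an injection $(\calT\cap K_j)/K_{j+1}\hookrightarrow K_j/K_{j+1}$, which is the base transversal axiom rephrased through $d\alpha\equiv d_0\alpha\pmod{K_{j+1}}$. Together with $\image P^\infty=\calP$ this gives the direct sum $K=\calT\oplus d\calT\oplus\calP$ of (b). Since $d(\calT\oplus d\calT)\subseteq d\calT$ and $d\calP\subseteq\calP$, this is a decomposition of cochain complexes, and $(\calT\oplus d\calT,d)$ is acyclic because $d$ restricts to an isomorphism $\calT\xrightarrow{\sim}d\calT$ (the injectivity just used) followed by $d\calT\to 0$. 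Hence $H(K)\cong H(\calP)$, realised by $P^\infty$ itself: it is a chain map, the identity on $\calP$, with acyclic kernel. This is (a), whose sharper form follows by writing a closed $\alpha=P^\infty\alpha+(\alpha-P^\infty\alpha)$ with $\alpha-P^\infty\alpha\in\ker P^\infty=\calT\oplus d\calT$ closed, hence exact by acyclicity, so $P^\infty\alpha$ lies in the class of $\alpha$. Finally, $K\to K/(\calT\oplus d\calT)$ being a quasi-isomorphism and $\alpha+\calT+d\calT\mapsto P^\infty\alpha$ being an isomorphism of complexes onto $\calP$ are restatements of $\ker P^\infty=\calT\oplus d\calT$ together with acyclicity.

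The main obstacle is the stabilisation of the powers of $P$ — including the bookkeeping of the power at which it occurs — together with the exact determination of $\ker P^\infty$ as $\calT\oplus d\calT$: turning the ``to leading order'' statements into honest equalities forces the filtration inductions to be carried out carefully, and it is there, rather than in the formal algebra, that the precise shape of the base transversal axiom ($\calT\cap K_j$ being the $d_0$-preimage of $K_j$) is indispensable, both to keep $h$ filtration-preserving and to make $d$ injective on $\calT$ modulo the filtration. Managing the interaction of the idempotent $\pi_0$ with the nilpotent $R$ in the expansion of $P^k$ is the other delicate point.
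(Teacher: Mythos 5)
Your proposal runs on the same engine as the paper's proof --- $P$ differs from an idempotent by an operator that strictly lowers a finite filtration --- but packages it quite differently. The paper never expands $P^k$: it shows $P\alpha \equiv \alpha \bmod (\calT + d\calT)$ to get $\ker P^k \subseteq \calT + d\calT$, shows $P(\calT\cap K_j)\subseteq \calT\cap K_{j+1}$ for the reverse inclusion, proves (a) directly from $P\alpha = \alpha - dd_0^{-1}\alpha$ for closed $\alpha$, and only then extracts the fixed-point description and the stabilisation as consequences. You instead front-load the decomposition $K=\calE_0\oplus\calT\oplus\image d_0$, write $P=\Pi_0-R$ with $R=d_0^{-1}\delta+\delta d_0^{-1}$, and try to read everything off the powers of $P$. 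Your block identities are all correct, and two of your sub-arguments are genuinely different from (and arguably cleaner than) the paper's: deducing (a) from the acyclicity of $(\calT\oplus d\calT,d)$ rather than directly, and proving $\ker(dd_0^{-1}+d_0^{-1}d)\subseteq\ker d_0^{-1}$ by iterating $d_0^{-1}\alpha=-(d_0^{-1}\delta)d_0^{-1}\alpha$ rather than via the transversality of $\calT$ and $d\calT$.

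The soft spot is exactly where you locate it: the stabilisation. ``Discarding words with too many strictly-lowering factors'' is not by itself sufficient, because after collapsing powers of the idempotent $\Pi_0$ the surviving words acquire coefficients that grow with $k$ (the collapsed word $\Pi_0R\Pi_0$ occurs $k-2$ times in $(\Pi_0-R)^k$); you must also invoke $\Pi_0R^m\Pi_0=0$ for $m\geq1$ --- which does follow from your own observations $R\calE_0\subseteq\calT$ and $R\calT\subseteq\calT$ --- to reduce $P^k$ to $(-1)^kR^k+\sum_{j<k}(-1)^j\sum_{a+b=j}R^a\Pi_0R^b$, after which the terms with $a+b\geq S+1$ die because $\Pi_0$ preserves the filtration. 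Carried out this way the expansion stabilises, but at $k=S+1$, not $k=S$: the $\calT\to\calT$ block of $P^S$ is $\bigl(-d_0^{-1}(d-d_0)\bigr)^S\big|_{\calT}$, which lands in $\calT\cap K_S$ and need not vanish. A concrete instance: $K=\spn\{a,b,c,e\}$, $K_1=\spn\{c,e\}$, $K_2=0$, $da=b+e$, $dc=e$, $db=de=0$, so $d_0a=b$, $d_0c=e$ and $\calT=\calC=\spn\{a,c\}$; then $S=1$ while $Pa=-c\neq0=P^2a$, and $\image P=\spn\{c,e\}$ is not the fixed-point set of $P$. This off-by-one is inherited from the paper, whose own argument likewise only yields $P^{S+1}\calT=0$; with $P^\infty$ read as $P^{S+1}$ throughout, both your argument and the paper's close up, but your claim that the expansion ``pins down $P^\infty=P^S$'' is not what the computation actually delivers.
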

\begin{proof}
\begin{enumerate}[\rm (a)]
\item If $\alpha$ is closed, $P\alpha = \alpha - dd_0^{-1} \alpha$. In other words, $\alpha$ and $P\alpha$ differ by an exact form. Applying this result iteratively, we have that $\alpha$ and $P^{\infty} \alpha$ are in the same cohomology class.
\item We will prove the first part of the statement by showing that there are no closed forms in $\calT$. This observation follows from the fact that  $\calT = \image d_0^{-1}$, and so
$$d_0^{-1}d |_{\calT} =\id|_{\calT} + d_0^{-1}(d-d_0)|_{\calT},$$
which has inverse $\sum_{i=0}^{S}[(-d_0^{-1} (d-d_0)|_{\calT}]^i$. Hence, $\calT$ is transverse to $d\calT$ and $K \to K/(\calT \oplus d\calT)$ is an isomorphism at the level of cohomology. For the second part of the statement, we first observe that since $\calT = \image d_0^{-1}$,
\begin{equation} \label{Palphaminalpha} P(\alpha) = \alpha \mod \calT + d\calT.\end{equation}
Hence, if $P^k(\alpha) = 0$, then we must have $\alpha \in \calT + d \calT$, giving us that $\ker P^k \subset \calT + d\calT$ for any $k > 0$. For the inclusion in the other direction, we first observe that for $\beta \in \calT \cap K_j$, we have
$$P(\beta) = \beta - d_0^{-1} d\beta = -d_0^{-1} (d-d_0)\beta \in \calT \cap K_{j+1},$$
which also implies that $P(d\beta) = dP(\beta) \in d \calT \cap K_{j+1}$. By the finiteness of the filtration, it follows that $P^\infty(\beta) = P^\infty(d\beta) = 0$.
\item Introduce $\hat \calP = \{ \alpha \in K \, : \, P(\alpha) = \alpha \}$. If $\alpha \in \hat \calP$, then $P^\infty(\alpha) = \alpha$ by definition, so $\hat \calP \subset \calP$. Conversely, if $\alpha \in \calP$, then $ \alpha =P^\infty(\beta)$, and furthermore, since $P(\beta) - \beta$ is in $\calT + d\calT$ by \eqref{Palphaminalpha}, we have
$$P(\alpha) - \alpha = P^{\infty}(P(\beta) - \beta) = 0.$$
giving us that $\calP \subset \hat \calP$.
\end{enumerate}
\end{proof}

\begin{corollary} \label{cor:d0Decomp}
Given a base differential $d_0\colon K\to K$ of $(K,d)$ with transversal and cotransversal $\mathcal{T}\subset\mathcal{C}$, we have the following direct sum decomposition for $K$,
$$K = \calT \oplus d_0 \calT \oplus \calP = \image d_0^{-1} \oplus \image d_0 \oplus \calP.$$
Finally, $\calC = \calT \oplus \calP$.
\end{corollary}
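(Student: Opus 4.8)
The plan is to derive everything from the decomposition $K = \calT \oplus d\calT \oplus \calP$ of Theorem~\ref{th:SetUp}(b) together with the identity $\calP = \{\alpha \in \calC : d\alpha \in \calC\}$ of Theorem~\ref{th:SetUp}(c), the crux being the intermediate equality $\calC = \calT \oplus \calP$. First I would record two elementary facts about the partial inverse of Definition~\ref{d_0 inv}: from $K = \ker d_0 \oplus \calT$ one gets $\image d_0 = d_0 K = d_0 \calT$, and since $\calT \cap \ker d_0 = 0$, the operator $d_0$ restricts to a linear isomorphism $\calT \to \image d_0$ whose inverse is exactly $d_0^{-1}$ on $\image d_0$, so that $\image d_0^{-1} = \calT$ (recall $d_0^{-1}$ annihilates $\calC$, a complement of $\image d_0$). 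Granting $\calC = \calT \oplus \calP$, the corollary then follows by assembly: $K = \image d_0 \oplus \calC = d_0\calT \oplus \calT \oplus \calP = \calT \oplus d_0\calT \oplus \calP$, with all sums direct because they are obtained from the direct sums $K = \image d_0 \oplus \calC$ and $\calC = \calT \oplus \calP$; substituting $d_0\calT = \image d_0$ and $\calT = \image d_0^{-1}$ gives the second form, and $\calC = \calT \oplus \calP$ is the final assertion.

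The real work is thus $\calC = \calT \oplus \calP$. The inclusion $\calT \oplus \calP \subseteq \calC$ is for free: $\calT \subseteq \calC$ by the definition of a base cotransversal, $\calP \subseteq \calC$ by Theorem~\ref{th:SetUp}(c), and $\calT \cap \calP = 0$ since both lie in the direct sum $K = \calT \oplus d\calT \oplus \calP$. For the reverse inclusion $\calC \subseteq \calT + \calP$, I would fix $\gamma \in \calC$ and exploit that $d_0^{-1}$ vanishes on $\calC$, so the operator $P = \id - d_0^{-1}d - dd_0^{-1}$ acts on $\gamma$ simply as $P\gamma = \gamma - d_0^{-1}d\gamma$; since $d_0^{-1}d\gamma \in \image d_0^{-1} = \calT \subseteq \calC$, this shows at once that $P$ maps $\calC$ into itself and that $\gamma - P\gamma \in \calT$. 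Iterating along the orbit $\gamma, P\gamma, \dots, P^S\gamma = P^\infty\gamma$, which stays inside $\calC$ by the invariance just observed, one gets $\gamma - P^\infty\gamma = \sum_{k=0}^{S-1}(P^k\gamma - P^{k+1}\gamma) \in \calT$, while $P^\infty\gamma \in \image P^\infty = \calP$ by definition; hence $\gamma = (\gamma - P^\infty\gamma) + P^\infty\gamma \in \calT + \calP$.

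I expect the only delicate point --- the ``main obstacle'' --- to be bookkeeping the iteration correctly: one has to use $\ker d_0^{-1} = \calC$ both to drop the $dd_0^{-1}$ term initially and, via the invariance $P(\calC) \subseteq \calC$, to keep it absent at every later step, so that each difference $P^k\gamma - P^{k+1}\gamma = d_0^{-1}d(P^k\gamma)$ genuinely lies in $\calT$. Once this is set up, the rest is routine manipulation of the already-established decomposition $K = \calT \oplus d\calT \oplus \calP$ and of the two identities $\image d_0 = d_0\calT$ and $\image d_0^{-1} = \calT$.
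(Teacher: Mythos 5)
Your proof is correct, but it runs in the opposite logical order from the paper's and uses a different mechanism for the key step. The paper first establishes $K = \calT \oplus d_0\calT \oplus \calP$ by an iterative descent through the filtration: starting from $\alpha = \beta_j + d\gamma_j + \omega_j$ it rewrites $d\gamma_j = d_0\gamma_j + (d-d_0)\gamma_j$ with the remainder one level deeper in the filtration, re-decomposes that remainder via Theorem~\ref{th:SetUp}~(b), and terminates by finite depth; only afterwards does it deduce $\calC = \calT \oplus \calP$ from the inclusion $\calT \oplus \calP \subset \calC$ together with the observation that both sides are complements of $\image d_0 = d_0\calT$. You instead prove $\calC = \calT \oplus \calP$ first, by an operator-level argument --- $\ker d_0^{-1} = \calC$ forces $P\gamma = \gamma - d_0^{-1}d\gamma$ on $\calC$, so $P$ preserves $\calC$, the telescoping differences $P^k\gamma - P^{k+1}\gamma$ all lie in $\image d_0^{-1} = \calT$, and $P^\infty\gamma \in \calP$ by definition --- and then obtain the three-term decomposition by substituting into $K = \image d_0 \oplus \calC$. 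Both arguments are sound and both ultimately rest on finite depth (yours through $P^\infty = P^S$, the paper's through the terminating recursion). What the paper's route buys is the explicit, level-by-level uniqueness of the components $\beta, \gamma, \omega$, which feeds directly into the closed formulas of the Remark that follows; what your route buys is a shorter, cleaner derivation of $\calC = \calT \oplus \calP$ that sidesteps the recursion bookkeeping entirely.
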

\begin{proof}
By Theorem~\ref{th:SetUp}~(b), for any $\alpha \in K_j$, we have unique $\beta_{j}, \gamma_j \in \calT \cap K_j$, $\omega_j \in \calP \cap K_j$, such that $\alpha = \beta_j + d\gamma_j + \omega_j$, and in particular we have that
\begin{align*}
    \alpha = \beta_j + d_0 \gamma_j +(d-d_0)\gamma_j+ \omega_j \ \text{ with }\ (d-d_0) \gamma_j \in K_{j+1}\,.
\end{align*}
Repeating this process on $\alpha_{j+1} = (d-d_0)\gamma_j$, we have that there exist unique $\beta_{j+1},\gamma_{j+1}\in\mathcal{T}\cap K_{j+1}$ and $\omega_{j+1}\in\mathcal{P}\cap K_{j+1}$ such that
\begin{align*}
\alpha_{j+1} =& \beta_{j+1}+d\gamma_{j+1}+\omega_{j+1}\\
=&\beta_{j+1}+d_0\gamma_{j+1}+(d-d_0)\gamma_{j+1}+\omega_{j+1}\ ,\text{
 with 
} (d-d_0)\gamma_{j+1}\in K_{j+2}\,.
\end{align*}
Proceeding iteratively and using the fact that our filtered complex has finite length gives us the unique decomposition $\alpha = \sum_{i=j}^S \beta_i + d_0 \sum_{i=j}^S \gamma_i + \sum_{i=j}^S \omega_i$.

By Theorem~\ref{th:SetUp}~(c), we have that $\calP \subset \calC$, hence $\calT \oplus \calP \subset \calC$, and the equality follows from the fact that both $\mathcal{T}$ and $\mathcal{P}$ are transversal to $\image d_0 = d_0 \calT$.
\end{proof}

\begin{remark}
Theorem  ~\ref{th:SetUp}~(b) and Corollary ~\ref{cor:d0Decomp} imply that for any element $\alpha\in K$, there exists a unique way of expressing $\alpha$ in terms of elements $\beta,\gamma\in \mathcal{T}$ and $\omega\in\mathcal{P}$.
Using the partial inverse $d_0^{-1}\colon K\to K$, one can also obtain an explicit formulation of these elements in terms of the given $\alpha\in K$.
\begin{enumerate}[\rm (a)]
\item For $\alpha\in K$ given as $\alpha = \beta + d\gamma + \omega \in K$, with $\beta, \gamma\in \calT$, $\omega \in \calP$, then
$$\omega = P^\infty(\alpha) \qquad \gamma = \sum_{i=0}^S [-d_0^{-1}(d-d_0)]^i d^{-1}_0 \alpha, \qquad \beta = \alpha - d \gamma - \omega .$$
These identities follows from the fact that $\calC = \calT \oplus \calP$, so
$$d_0^{-1} \alpha = d_0^{-1} d \gamma = \gamma + d_0^{-1} (d-d_0) \gamma\ \Longrightarrow\ \gamma = \sum_{i=0}^S [- d_0^{-1}(d-d_0)]^i d^{-1}_0 \alpha\,.$$
\item For $\alpha\in K$ given as $\alpha = \beta + d_0\gamma + \omega \in K$, with $\beta, \gamma\in \calT$, $\omega \in \calP$, then $\gamma = d_0 d_0^{-1} \alpha$, and so
$$\omega = P^\infty(\alpha - d_0 \gamma), \qquad \beta = (\id - P^\infty)(\alpha - d_0 \gamma)\ .$$

\item Observe that by Theorem~\ref{th:SetUp}~(c), there is a unique element $\beta$ in each equivalence class $\alpha + \calT + d\calT$ satisfying $$d^{-1}_0\beta =0 \quad \text{and} \quad  d^{-1}_0 d \beta =0\ .$$
\item Moreover, the subcomplex $(\calP,d)$ is itself a filtered complex
$$\calP = \calP_0 \supset \calP_1 \supset  \cdots \supset \calP_S \supset \calP_{S+1} =0\ \text{ with }\ \calP_j = \calP \cap K_j\ .$$
\end{enumerate}
\end{remark}

\subsection{Analogues of Rumin forms and Hodge decomposition}\label{subsection Rumin}
Given a base differential $d_0$ of a filtered complex $(K,d)$ with relative base transversal and cotransversal $\mathcal{T}\subset\mathcal{C}$, let us introduce the subspace
$$\calE_0 = \ker d_0 \cap \ker d_0^{-1}  = \ker d_0 \cap \calC \subset K\ .$$
From the definition of $\mathcal{E}_0$, we readily get a direct sum decomposition of $\ker d_0$ as  $\ker d_0 = \image d_0 \oplus \calE_0$. Moreover, the map 
\begin{equation} \label{Pi0operator} \Pi_{0} = \id - d_0d_0^{-1} - d_0^{-1}d_0,\end{equation}
is a projection onto $\calE_0$ and
$$K = (\image d_0^{-1}) \oplus (\image d_0) \oplus \calE_0 = \calT \oplus d_0 \calT \oplus \calE_0\ .$$
\begin{proposition}\label{prop 2.9}
The map $\Pi_0|_{\calP}: \calP \to \calE_0$ is a bijection with inverse $P^\infty|_{\calE_0}: \calE_0 \to \calP$. It follows that, if we define $d_c: \calE_0 \to \calE_0$ as
$$d_c \alpha = \Pi_0(P^\infty(d\alpha))=\Pi_0(dP^\infty(\alpha))\ , $$
then $(\calE_0, d_c)$ and $(\calP, d|_{\calP})$ are isomorphic as differential complexes. If we introduce a filtration on $\mathcal{E}_0$ by imposing $\calE_{0,j} = \calE_0 \cap K_j$, then $(\calE_0, d_c)$ and $(\calP, d|_{\calP})$ are isomorphic as filtered complexes.
\end{proposition}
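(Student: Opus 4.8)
The plan is to deduce the entire statement from one auxiliary direct sum decomposition of $K$, and then to transport the differential from $\calP$ to $\calE_0$ along the induced linear isomorphism. The key claim is that
$$K = (\calT \oplus d\calT) \oplus \calE_0 ,$$
i.e.\ that $\calE_0$ is a complement of $\ker P^\infty = \calT \oplus d\calT$ (Theorem~\ref{th:SetUp}~(b)). For transversality I would argue as follows: if $\beta + d\gamma \in \calE_0$ with $\beta,\gamma\in\calT$, then since $\calE_0 \subset \calC = \ker d_0^{-1}$ and $\calT \subset \calC$, applying $d_0^{-1}$ gives $0 = d_0^{-1}d\gamma = \big(\id|_\calT + d_0^{-1}(d-d_0)|_\calT\big)(\gamma)$; the operator in brackets is invertible, as established in the proof of Theorem~\ref{th:SetUp}~(b), so $\gamma = 0$, and then $\beta \in \calT \cap \ker d_0 = 0$. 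For spanning, given $x \in K$ I would write $x = \beta + d\gamma + \omega$ with $\beta,\gamma \in \calT$ and $\omega \in \calP \subset \calC$ (Theorem~\ref{th:SetUp}~(b) and Corollary~\ref{cor:d0Decomp}); then $d_0^{-1}\omega = 0$, hence $\omega = \Pi_0\omega + d_0^{-1}d_0\omega$ with $\Pi_0\omega \in \calE_0$ and $d_0^{-1}d_0\omega \in \image d_0^{-1} = \calT$, so $x \in (\calT \oplus d\calT) + \calE_0$.

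Granting this decomposition, the bijection is immediate: restricting the linear map $P^\infty$, whose kernel is $\calT \oplus d\calT$ and whose image is $\calP$, to the complement $\calE_0$ of its kernel yields an isomorphism $P^\infty|_{\calE_0}\colon \calE_0 \to \calP$. To recognise $\Pi_0|_\calP$ as its inverse, I note that for $\omega \in \calP$ the identity $\omega = d_0^{-1}d_0\omega + \Pi_0\omega$ above expresses $\omega$ as a sum of an element of $\ker P^\infty$ and an element of $\calE_0$, so by uniqueness of the decomposition the $\calE_0$-component of $\omega$ equals $\Pi_0\omega$; that is, $(P^\infty|_{\calE_0})^{-1} = \Pi_0|_\calP$, equivalently $\Pi_0 P^\infty = \id$ on $\calE_0$ and $P^\infty \Pi_0 = \id$ on $\calP$.

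For the statement about complexes, $dP = Pd$ gives $dP^\infty = P^\infty d$, so the two displayed formulas for $d_c$ coincide, and by its very definition $d_c$ is the differential $d|_\calP$ carried across the isomorphism of the previous paragraph. It then remains only to check that $\phi := P^\infty|_{\calE_0}$ is a cochain map: for $\alpha \in \calE_0$ one has $dP^\infty\alpha \in \calP$ since $\calP$ is $d$-stable, whence $\phi(d_c\alpha) = P^\infty\Pi_0(dP^\infty\alpha) = dP^\infty\alpha = d(\phi\alpha)$ using $P^\infty\Pi_0 = \id$ on $\calP$; being a bijective cochain map, $\phi$ is an isomorphism $(\calE_0,d_c) \cong (\calP,d|_\calP)$, and in particular $d_c^2 = 0$. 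For the filtered refinement, $d$, $d_0$ and $d_0^{-1}$ all send $K_j$ into $K_j$ (the last as recorded just after Definition~\ref{d_0 inv}), hence so do $P^\infty$ and $\Pi_0$; thus $\phi(\calE_0 \cap K_j) \subset \calP \cap K_j$ and, since conversely any $\omega \in \calP \cap K_j$ equals $\phi(\Pi_0\omega)$ with $\Pi_0\omega \in \calE_0 \cap K_j$, we get $\phi(\calE_{0,j}) = \calP_j$, so $\phi$ is an isomorphism of filtered complexes.

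The main obstacle will be the transversality $(\calT \oplus d\calT) \cap \calE_0 = 0$ in the first step: this is the only place where finite depth is genuinely used, via invertibility of $\id|_\calT + d_0^{-1}(d-d_0)|_\calT$. Everything else is bookkeeping with the projections $P^\infty$ and $\Pi_0$ and the decompositions already obtained in Theorem~\ref{th:SetUp} and Corollary~\ref{cor:d0Decomp}.
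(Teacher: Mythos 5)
Your proof is correct, but it is organised differently from the paper's. The paper verifies both composites directly: for $\alpha\in\calP\subset\calC$ it computes $\Pi_0\alpha=\alpha-d_0^{-1}d_0\alpha$ and uses $\calT\subset\ker P^\infty$ to get $P^\infty\Pi_0\alpha=\alpha$, and for $\alpha\in\calE_0$ it computes $P^\infty\alpha=(\id-d_0^{-1}d)^S\alpha=\alpha\bmod\image d_0^{-1}$ to get $\Pi_0 P^\infty\alpha=\alpha$; the cochain and filtration statements then follow as you also argue. You instead first establish the decomposition $K=(\calT\oplus d\calT)\oplus\calE_0$ --- a decomposition that does not appear in the paper, though it sits naturally alongside $K=\calT\oplus d\calT\oplus\calP$ and $K=\calT\oplus d_0\calT\oplus\calE_0$ --- so that bijectivity of $P^\infty|_{\calE_0}$ onto $\image P^\infty=\calP$ is automatic, and only one composite ($P^\infty\Pi_0=\id$ on $\calP$, via $\omega=d_0^{-1}d_0\omega+\Pi_0\omega$) needs to be computed to pin down the inverse. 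The price is that your transversality step has to re-invoke the invertibility of $\id|_{\calT}+d_0^{-1}(d-d_0)|_{\calT}$ from the proof of Theorem~\ref{th:SetUp}~(b), whereas the paper's second computation is self-contained; the gain is that you never need the explicit form of $P^\infty$ on $\calE_0$, and you record a reusable direct-sum decomposition of $K$. All the individual steps check out (in particular $d_0^{-1}d_0\gamma=\gamma$ for $\gamma\in\calT$, $\calT\cap\ker d_0=0$, and the preservation of $K_j$ by $d$, $d_0$, $d_0^{-1}$, hence by $P^\infty$ and $\Pi_0$), so this is a valid alternative proof.
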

The notation $(\calE_0, d_c)$ is used to reflect Rumin's notation in \cite{Rumin4}.

\begin{proof}
We observe that if $\alpha \in \calP = \{ \alpha \in \calC \, : \, d\alpha \in \calC\}$, then
$$P^\infty(\Pi_0(\alpha)) = P^\infty(\alpha - d^{-1}_0 d_0 \alpha) = \alpha,$$
since $d_0^{-1} d_0 \alpha \in \calT$. On the other hand, if $\alpha \in \calE_0$ then
$$P^\infty(\alpha) = (\id - d_0^{-1} d)^S \alpha = \alpha \bmod \image d_0^{-1},$$
so $\Pi_0(P^\infty \alpha) = \alpha$. Observe that both $\Pi_0$ and $P^\infty$ preserve the filtration. It follows that if $d_c \alpha = \Pi_0(d P^\infty(\alpha))$ then $(\calE_0, d_c)$ and $(\calP, d|_{\calP})$ are by definition isomorphic complexes.
\end{proof}

We emphasize the following special case. Assume that we have a filtered complex $(K,d)$ as in \eqref{filteredK} with a Hilbert space structure, such that every subcomplex $K_j$ is closed in $K$. We can then use the construction in the proof of Proposition~\ref{prop:existence}, to define $K_j = K_{(j)} \oplus_\perp K_{j+1}$ where $K_{(j)}$ is the orthogonal complement of $K_{(j+1)}$ in $K_j$. We again define $d_0$ as the linear operator such that for $\alpha \in K_{(j)}$, we have $d_0\alpha$ as the projection of $d\alpha$ to $K_{(j)}$. We make the final assumption that $\ker d_0$ and $\image d_0$ are both closed subspaces. Then one can take $\calT = (\ker d_0)^\perp$ and $\calC = (\image d_0)^\perp$ as base transversal and cotransversal for $d_0$. It is also possible to consider the formal adjoint $d_0^*$ of $d_0$ with respect to the inner product of such Hilbert space. We can then write a Hodge-type decomposition
\begin{equation} \label{E0Decomp} K = \calT \oplus_\perp d_0 \calT \oplus_\perp \calE_0 = \image(d_0^*) \oplus_\perp \image(d_0) \oplus_\perp \ker(\Box_0)\ ,\text{ with }\Box_0 = d_0 d_0^* + d_0^* d_0\ ,\end{equation}
and the subcomplex $(\ker \Box_0, d_c)$ is isomorphic to $(K,d)$ at the cohomology level (see also \cite{DaHa22} and \cite{fischer2023alternative}). By contrast, in the same special case, we have no reason for the decompositions $K = \calT \oplus d\calT \oplus \calP$ and $K = \calT \oplus d_0 \calT \oplus \calP$ to be orthogonal, making \eqref{E0Decomp} preferable in some cases, see, e.g., application in \cite{GrSl25}.

\subsection{Invariance under choice of transversal and cotransversal}
Let $d_0$ be a base differential of the filtered cochain complex $(K,d)$, with base transversal and cotransversal $\calT \subset \calC$. Let $\tilde \calT \subset \tilde \calC$ be a different choice of transversal and cotransversal for the same base differential $d_0$, and define the subcomplexes $\calP =  \calP_{d_0, \calT \subset \calC}$ and $\tilde \calP =  \calP_{d_0, \tilde \calT \subset \tilde \calC}$ relative to these two different choices. The following result holds.

\begin{proposition} \label{prop:TransInv}
If $P^\infty = P^{S}$ is defined as in \eqref{Poperator} with respect to the base transversal and cotransversal $\calT \subset \calC$, while $\tilde \Pi_0$ is defined as in \eqref{Pi0operator} with respect to $\tilde \calT \subset \calC$ then $P^\infty\circ\tilde\Pi_0|_{\tilde \calP}$ is an isomorphism between $\tilde \calP$ and $\calP$ as filtered differential complexes.
\end{proposition}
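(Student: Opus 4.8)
The plan is to factor the map $\Phi:=P^\infty\circ\tilde\Pi_0|_{\tilde\calP}$ through the isomorphisms of Proposition~\ref{prop 2.9} and then settle bijectivity on the associated graded complex, leaving only the chain-map property to be checked by hand. First I would apply Proposition~\ref{prop 2.9} to the choice $\tilde\calT\subset\tilde\calC$: the projection $\tilde\Pi_0|_{\tilde\calP}$ is an isomorphism of filtered complexes from $(\tilde\calP,d)$ onto $(\tilde\calE_0,\tilde d_c)$, where $\tilde\calE_0=\ker d_0\cap\tilde\calC$ and $\tilde\Pi_0=\id-d_0\widetilde{d_0^{-1}}-\widetilde{d_0^{-1}}d_0$, with inverse $\tilde P^\infty|_{\tilde\calE_0}$. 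Hence it is enough to show that $P^\infty|_{\tilde\calE_0}\colon(\tilde\calE_0,\tilde d_c)\to(\calP,d|_{\calP})$ is an isomorphism of filtered complexes, since $\Phi$ is then the composition of two such isomorphisms; and $\Phi$ preserves the filtration because both $P^\infty$ and $\tilde\Pi_0$ do.

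For bijectivity I would pass to $\operatorname{gr}K=\bigoplus_j K_j/K_{j+1}$. Since $(d-d_0)K_j\subset K_{j+1}$, the differential $d$ induces on $\operatorname{gr}K$ coincides with the one, $\delta_0$, induced by $d_0$; and because the decompositions $K_j=(\ker d_0\cap K_j)\oplus(\calT\cap K_j)=(\image d_0\cap K_j)\oplus(\calC\cap K_j)$ hold (as for the transversal and cotransversal built in Proposition~\ref{prop:existence}, and likewise for $\tilde\calT\subset\tilde\calC$), the map $d_0^{-1}$ preserves the filtration and induces $\delta_0^{-1}$. Thus $P=\id-d_0^{-1}d-dd_0^{-1}$ induces on each $\operatorname{gr}_j K$ the idempotent $\delta_0$-harmonic projection $\Pi_j=\id-\delta_0^{-1}\delta_0-\delta_0\delta_0^{-1}$, so $\operatorname{gr}_j P^\infty=\Pi_j$. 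By Theorem~\ref{th:SetUp}(c) the image of $\Pi_j$ is $\operatorname{gr}_j\calP=\ker\delta_0\cap\ker\delta_0^{-1}$, and by Corollary~\ref{cor:d0Decomp} one has $\ker\delta_0=\image\delta_0\oplus\operatorname{gr}_j\calP$; since $\operatorname{gr}_j\tilde\calE_0=\ker\delta_0\cap\operatorname{gr}_j\tilde\calC$ is also a complement of $\image\delta_0$ in $\ker\delta_0$, the projection $\Pi_j$ restricts to an isomorphism $\operatorname{gr}_j\tilde\calE_0\xrightarrow{\ \sim\ }\operatorname{gr}_j\calP$. Therefore $\operatorname{gr}(P^\infty|_{\tilde\calE_0})$ is an isomorphism, and, $P^\infty|_{\tilde\calE_0}$ being a filtration-preserving map between vector spaces with finite filtrations, it is an isomorphism of filtered vector spaces (downward induction on $j$, which in particular gives injectivity).

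It remains to verify that $P^\infty|_{\tilde\calE_0}$ is a chain map, i.e.\ $d\circ P^\infty=P^\infty\circ\tilde d_c$ on $\tilde\calE_0$; using $dP^\infty=P^\infty d$, this reduces to $(d-\tilde d_c)(\tilde\calE_0)\subset\ker P^\infty=\calT\oplus d\calT$. For $e\in\tilde\calE_0$, write $\tilde P^\infty e=e+\tilde t$ with $\tilde t\in\tilde\calT$ and note that $w:=d\tilde P^\infty e$ lies in $\tilde\calP\subset\tilde\calC=\ker\widetilde{d_0^{-1}}$; then $\tilde d_c e=\tilde\Pi_0(w)=w-\widetilde{d_0^{-1}}d_0 w$, so that $(d-\tilde d_c)e=de-w+\widetilde{d_0^{-1}}d_0 w=-d\tilde t+\widetilde{d_0^{-1}}d_0 w$, an element of $d\tilde\calT+\tilde\calT=\ker\tilde P^\infty$. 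The delicate point — the step I expect to be the main obstacle, and the only place where the two families of choices $\calT\subset\calC$ and $\tilde\calT\subset\tilde\calC$ genuinely interact — is to see that this element also lies in $\ker P^\infty$, i.e.\ in $(\calT\oplus d\calT)\cap(\tilde\calT\oplus d\tilde\calT)$; I would establish this by the same degree-by-degree argument used for bijectivity, peeling off the lowest filtration level, using that both decompositions are filtration-compatible and that $d_0$ is injective on each graded piece of $\calT$ and of $\tilde\calT$. Once this is done, $\Phi=P^\infty\circ\tilde\Pi_0|_{\tilde\calP}$ is an isomorphism of filtered differential complexes between $\tilde\calP$ and $\calP$.
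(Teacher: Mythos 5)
You have correctly reduced the problem, via Proposition~\ref{prop 2.9}, to the map $P^\infty|_{\tilde\calE_0}$, and your bijectivity argument via the associated graded is workable (though much heavier than needed: the paper simply checks by a two-line computation that $\Pi_0$ and $\tilde\Pi_0$ are mutually inverse between $\tilde\calE_0$ and $\calE_0$, which gives bijectivity of the composite at once). The genuine gap is in the chain-map step, and it is exactly the point you flag yourself. You correctly reduce the problem to showing that
$$(d-\tilde d_c)e=-d\tilde t+\tilde d_0^{-1}d_0w\ \in\ \ker P^\infty=\calT\oplus d\calT,$$
and you correctly observe that this element lies in $\tilde\calT\oplus d\tilde\calT$. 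But $\tilde\calT\oplus d\tilde\calT$ and $\calT\oplus d\calT$ are complements of two \emph{different} subspaces ($\tilde\calP$ and $\calP$), and neither contains the other in general: already for $S=1$ one can build a five-dimensional example ($K=\langle a,b,u,w,v\rangle$, $K_1=\langle u,w,v\rangle$, $da=b+w$, $du=v$, $db=dw=dv=0$, $\calT=\langle a,u\rangle$, $\tilde\calT=\langle a+w,u\rangle$) where $a+w\in\tilde\calT$ but $a+w\notin\calT\oplus d\calT$, and where $\image d_0\not\subset\ker P^\infty$ (here $P^\infty(d_0a)=P^\infty b=-w\neq0$). So membership of your specific element in $\ker P^\infty$ is not a soft consequence of filtration-compatibility, and the ``same degree-by-degree argument used for bijectivity'' does not transfer: that argument establishes that a graded linear map is an isomorphism, which is a different kind of statement from locating one element inside the intersection of two transversals. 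As written, the proof of the chain-map property is incomplete at its only nontrivial point.

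For comparison, the paper routes the argument differently: it inserts the projection $\Pi_0$ and works with the composition $\tilde\calP\xrightarrow{\tilde\Pi_0}\tilde\calE_0\xrightarrow{\Pi_0}\calE_0\xrightarrow{P^\infty}\calP$, using that $\Pi_0$ and $\tilde\Pi_0$ are mutually inverse on the two $\calE_0$-spaces and that the composite collapses to $P^\infty\circ\tilde\Pi_0$ because $P^\infty\bigl(d_0d_0^{-1}\tilde\Pi_0(\alpha)\bigr)$ drops out. This shifts the burden from your element $-d\tilde t+\tilde d_0^{-1}d_0w$ to the element $d_0d_0^{-1}\tilde\Pi_0(\alpha)\in\image d_0$, but some justification that the relevant correction terms are killed by $P^\infty$ is still required (and, as the example above shows, cannot be the blanket statement $\image d_0\subset\ker P^\infty$). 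If you want to complete your route, you should prove the needed containment for your specific element directly — for instance by expressing $-d\tilde t+\tilde d_0^{-1}d_0w$ as $(d-\tilde d_c)e$ with $de$ and $\tilde d_c e$ both manifestly congruent to $dP^\infty e$ modulo $\calT\oplus d\calT$ — rather than appealing to a containment of the two transversal sums, which is false.
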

\begin{proof}
Let $d_0^{-1}$ and $\tilde d_0^{-1}$ be the partial inverses of the base differential $d_0$, constructed using to $\calT \subset \calC$ and $\tilde \calT \subset \tilde \calC$ respectively. 
We introduce $\Pi_0 = \id - d_0 d_0^{-1} - d_0^{-1}d_0$ and $\tilde \Pi_0 = \id - d_0 \tilde d_0^{-1} - \tilde d_0^{-1}d_0$ as the relative projections onto $\calE_0 = \ker d_0 \cap \calC$ and $\tilde \calE_0 = \ker d_0 \cap \tilde \calC$. We first see that, for any $\alpha \in \tilde \calE_0$
$$\tilde \Pi_0(\Pi_0(\alpha)) = \tilde \Pi_0(\alpha - d_0  d_0^{-1} \alpha) = \alpha\ .$$
We have similar relations with $\Pi_0$ and $\tilde \Pi_0$ reversed. Hence, the following sequence of invertible maps
$$\tilde \calP \stackrel{\tilde \Pi_0}{\longrightarrow} \tilde \calE_0 \stackrel{\Pi_0}{\longrightarrow} \calE_0 \stackrel{P^\infty}{\longrightarrow} \calP,$$
are all linear isomorphisms that preserve the filtration induced by the filtration \eqref{filteredK} on $K$.

To conclude, it is sufficient to observe that for any $\alpha \in \tilde \calP$, we have
\begin{align*}
     P^\infty(\Pi_0( \tilde \Pi_0(\alpha))) =& P^\infty(\Pi_0( \alpha - \tilde d_0^{-1} d_0 \alpha ))=P^\infty(\alpha-\tilde d_0^{-1}d_0\alpha-d_0d_0^{-1}(\alpha-\tilde d_0^{-1}d_0\alpha)) \\
     = &P^\infty(\alpha -\tilde d_0^{-1} d_0 \alpha) = P^\infty\circ\tilde \Pi_0(\alpha)\ .
\end{align*}
\end{proof}

\subsection{Invariance under choice of base differential} 
We now look into the base differentials themselves. Let $d_0$ be a base differential for a filtered complex $(K,d)$, with base transversal and cotransversal $\calT \subset \calC$, and with $\calP = \calP_{d_0, \calT \subset \calC}$. Let $\tilde d_0$ be a different choice of base differential of the same filtered complex $(K,d)$, with similar associated objects $\tilde \calT \subset \tilde \calC$ and $\tilde \calP$. We then have the following result

\begin{proposition} \label{prop:Invariance}
\begin{enumerate}[\rm (a)]
    \item $\calT \subset \calC$ are also base transversals and cotransversals for $\tilde d_0$.
    \item If $P^\infty = P^{S}$ is defined as in \eqref{Poperator} with respect to $d_0$ and $\calT \subset \calC$ and $\tilde \Pi_0$ is defined as in \eqref{Pi0operator} with respect to $\tilde d_0$ and $\tilde \calT \subset \tilde \calC$, then $P^\infty\circ\tilde\Pi_0|_{\tilde \calP}$ is an isomorphism of differential complexes between $\tilde \calP$ and $\calP$.
\end{enumerate}
\end{proposition}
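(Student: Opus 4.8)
The plan is to make part (a) the substantive step and then read off part (b) formally from it, Theorem~\ref{th:SetUp}, and Proposition~\ref{prop:TransInv}. Everything in (a) is powered by one observation: $d_0 - \tilde d_0$ raises the filtration degree by at least one, since for $\alpha \in K_j$ one has $(d_0 - \tilde d_0)\alpha = (d - \tilde d_0)\alpha - (d - d_0)\alpha \in K_{j+1}$. Together with $K_{S+1} = 0$ this turns the identities we need into finite inductions on (or arguments using the maximal) filtration degree. I would also first record the elementary consequences of the transversal and cotransversal axioms that get used over and over: for a base transversal $\calT$ and cotransversal $\calC$ of $d_0$ one has $K_j = (\ker d_0 \cap K_j) \oplus (\calT \cap K_j)$, $\image d_0 \cap K_j = d_0(\calT \cap K_j)$, $K_j = (\image d_0 \cap K_j) \oplus (\calC \cap K_j)$, and, for $\tau \in \calT$, $\tau \in K_j$ precisely when $d_0 \tau \in K_j$; each follows quickly from $d_0 K_j \subset K_j$, $d_0^{-1} K_j \subset K_j$ and the defining property of $\calT$, and of course the analogues hold for $\tilde d_0$, $\tilde\calT$, $\tilde\calC$.

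For part (a) I would check four claims in order. (1) $\calT \cap \ker \tilde d_0 = 0$: if $0 \neq \tau \in \calT$ satisfied $\tilde d_0 \tau = 0$, pick $j$ maximal with $\tau \in K_j$; then $d_0 \tau = (d_0 - \tilde d_0)\tau \in K_{j+1}$, so the transversal property of $\calT$ forces $\tau \in K_{j+1}$, contradicting maximality. (2) $K = \ker \tilde d_0 + \calT$, by downward induction on $j$ starting from $K_{S+1} = 0$: write $\alpha \in K_j$ as $\kappa + \tau$ with $\kappa \in \ker d_0 \cap K_j$, $\tau \in \calT \cap K_j$; then $\tilde d_0 \kappa = (\tilde d_0 - d_0)\kappa \in \image \tilde d_0 \cap K_{j+1} = \tilde d_0(\tilde\calT \cap K_{j+1})$, so $\tilde d_0 \kappa = \tilde d_0 \sigma$ for some $\sigma \in \tilde\calT \cap K_{j+1}$, and $\alpha = (\kappa - \sigma) + \sigma + \tau$ with $\kappa - \sigma \in \ker \tilde d_0$ and $\sigma \in K_{j+1}$ handled by the inductive hypothesis. (3) the transversal property of $\calT$ relative to $\tilde d_0$, i.e. $\tilde d_0 \tau \in K_m \Rightarrow \tau \in K_m$ for $\tau \in \calT$: if not, take $j < m$ maximal with $\tau \in K_j$; then $d_0 \tau = \tilde d_0 \tau - (\tilde d_0 - d_0)\tau \in K_{j+1}$, again forcing $\tau \in K_{j+1}$. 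Claims (1)--(3) say that $\calT$ is already a base transversal for $\tilde d_0$; in particular $\image \tilde d_0 = \tilde d_0 \calT$. (4) $K = \image \tilde d_0 \oplus \calC$: for surjectivity run the same downward induction, writing $\xi \in K_j$ as $d_0 \tau + c$ with $\tau \in \calT \cap K_j$, $c \in \calC \cap K_j$, and replacing $d_0 \tau$ by $\tilde d_0 \tau - (\tilde d_0 - d_0)\tau$; for directness, if $\gamma = \tilde d_0 \tau \in \calC$ with $\tau \in \calT$ then applying $d_0^{-1}$ gives $0 = d_0^{-1}\gamma = \tau + d_0^{-1}(\tilde d_0 - d_0)\tau$, and since $-d_0^{-1}(\tilde d_0 - d_0)$ strictly raises the filtration degree, iterating forces $\tau \in K_{S+1} = 0$, hence $\gamma = 0$. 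As $\calT \subset \calC$ is unchanged, this shows $\calC$ is a base cotransversal for $\tilde d_0$ with transversal $\calT$, which is part (a).

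For part (b), first note that by (a) the base subcomplex $\calP_{\tilde d_0, \calT \subset \calC}$ is defined, and by Theorem~\ref{th:SetUp}(c) it equals $\{\alpha \in \calC : d\alpha \in \calC\}$, which is the very same description as $\calP = \calP_{d_0, \calT \subset \calC}$; hence the two subcomplexes coincide. Next, apply Theorem~\ref{th:SetUp}(b) once to $d_0$ and once to $\tilde d_0$, each with $\calT \subset \calC$: both $(P_{d_0, \calT \subset \calC})^\infty$ and $(P_{\tilde d_0, \calT \subset \calC})^\infty$ are the projection of $K$ onto $\calP$ along $\calT \oplus d\calT$, so they are equal, and this common operator is the $P^\infty$ of the statement. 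Finally, $\tilde\calP = \calP_{\tilde d_0, \tilde\calT \subset \tilde\calC}$ and $\calP = \calP_{\tilde d_0, \calT \subset \calC}$ are two base subcomplexes of the single base differential $\tilde d_0$, differing only in the choice of transversal and cotransversal; Proposition~\ref{prop:TransInv}, applied with $\tilde d_0$ in the role of the fixed base differential, then says that $P^\infty \circ \tilde\Pi_0|_{\tilde\calP}$ --- where $\tilde\Pi_0 = \id - \tilde d_0 \tilde d_0^{-1} - \tilde d_0^{-1}\tilde d_0$ is the projection onto $\tilde\calE_0 = \ker \tilde d_0 \cap \tilde\calC$ --- is an isomorphism of filtered differential complexes from $\tilde\calP$ onto $\calP$. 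This is exactly the assertion of (b).

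The hard part will be (a), and inside it the cotransversal statement $K = \image \tilde d_0 \oplus \calC$: one may not assume $\ker d_0 = \ker \tilde d_0$ or $\image d_0 = \image \tilde d_0$ (only that these pairs agree in the associated graded), so the directness $\image \tilde d_0 \cap \calC = 0$ has to be squeezed out by hand, and the trick that makes it work is to first establish through claims (1)--(3) that $\calT$ is itself a base transversal for $\tilde d_0$, which is what lets one write $\image \tilde d_0 = \tilde d_0 \calT$ and bring $d_0^{-1}$ to bear. Once (a) is in place, part (b) is pure bookkeeping: identify the two copies of $\calP$, identify the two copies of $P^\infty$, and invoke Proposition~\ref{prop:TransInv}.
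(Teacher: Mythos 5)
Your proof is correct, and while it rests on the same underlying mechanism as the paper's --- namely that $N = d_0 - \tilde d_0$ raises the filtration degree, so that $d_0^{-1}N$ is nilpotent --- the packaging is genuinely different at both stages. For part (a) the paper makes one slick move: it sets $A = \id - d_0^{-1}N$, observes $d_0 d_0^{-1}\tilde d_0 = d_0 A$ on $K$, and reads off the invertibility of $\tilde d_0|_{\calT}$, the transversal property (via $A^{-1}(\calT\cap K_j)=\calT\cap K_j$), and the cotransversal property (via the projection $\alpha\mapsto \tilde d_0 A^{-1}d_0^{-1}\alpha$) all from this single conjugation identity; you instead run four explicit downward inductions on the filtration degree. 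The two are logically equivalent (your iteration $\tau = -d_0^{-1}(\tilde d_0-d_0)\tau \Rightarrow \tau\in K_{S+1}=0$ is exactly the invertibility of $A$ on $\calT$), but yours is more elementary and makes visible the one fact the paper leaves compressed, namely that $K=\ker\tilde d_0 + \calT$, which the paper only gets implicitly from the surjectivity of $d_0 d_0^{-1}|_{\tilde d_0\calT}$. For part (b) both proofs reduce to the equal-transversal case via (a) and Proposition~\ref{prop:TransInv}, but where the paper computes $\tilde\calP=\calP$ by writing $\tilde d_0^{-1}=A^{-1}d_0^{-1}$ and unwinding the description $\ker\tilde d_0^{-1}\cap\ker(\tilde d_0^{-1}d)$, you simply quote the $d_0$-free characterization $\calP=\{\alpha\in\calC : d\alpha\in\calC\}$ from Theorem~\ref{th:SetUp}(c), which depends only on $\calC$ and $d$ and hence gives the coincidence for free; this is cleaner. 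You also supply a step the paper glosses over: the identification of the two idempotents $(P_{d_0,\calT\subset\calC})^\infty$ and $(P_{\tilde d_0,\calT\subset\calC})^\infty$ as the same projection onto $\calP$ along $\calT\oplus d\calT$, which is needed to match the operator in the statement with the one produced by Proposition~\ref{prop:TransInv} applied to $\tilde d_0$. What the paper's route buys is brevity and a reusable formula ($\tilde d_0^{-1}=A^{-1}d_0^{-1}$) that it exploits again in its proof of (b); what yours buys is transparency about exactly which axioms of transversal and cotransversal are being verified and where the finite depth of the filtration enters.
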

\begin{proof}
\begin{enumerate}[\rm (a)]
    \item Let us introduce the notation $\tilde d_0 = d_0 - N$, where the map $N = d_0- \tilde d_0 =  ( d-\tilde d_0) - (d- d_0)$ maps $K_j$ into $K_{j+1}$. It follows that $\hat N := d_0^{-1} N$ is nilpotent with $\hat N^{S+1} =0$, meaning that $A : =\id - \hat N$ is invertible.  If we consider the projection map $d_0d_0^{-1}$ applied to the image of $\tilde d_0$, we have that for any $\alpha\in K$
    \begin{align*}
        d_0d_0^{-1}(\tilde d_0\alpha)=d_0d_0^{-1}(d_0\alpha-N\alpha)=d_0\alpha-d_0\hat N\alpha=d_0(\alpha-\hat N\alpha)
    \end{align*}
    and so we get
    \begin{align*}
    d_0 d^{-1}_0|_{\image(\tilde d_0)} &: \image \tilde d_0 \to \image d_0\ ,\ 
    \tilde d_0 \alpha  \longmapsto d_0 A \alpha \ .
    \end{align*}
    The map given by the composition 
    $$\calT \stackrel{A^{-1}}{\longrightarrow} \calT \stackrel{\tilde d_0}{\longrightarrow} \tilde d_0 \calT \stackrel{d_0 d_0^{-1}}{\longrightarrow} \image d_0\ , $$
    simplifies into $d_0\colon\mathcal{T}\to \mathrm{image\,}d_0$, since for any $\alpha\in K$ we have
    $$d_0d_0^{-1}\tilde d_0A^{-1}\alpha=d_0(A -\id +d^{-1}_0 d_0 )A^{-1}\alpha=d_0AA^{-1}\alpha=d_0\alpha.$$
    Since $d_0$ is an invertible map from $\calT$ to $\image d_0$, this implies that $\tilde d_0|_{\calT}$ must be invertible as well. Repeating this argument for every $K_j$ and $(\calT \cap K_j)$, we have $K_j = \ker \tilde d_0|_{K_j} \oplus ( \calT \cap K_j)$. Finally, by construction we have $\mathcal{T}\subset\mathcal{C}$. Moreover the map
    $$K \to \image \tilde d_0, \qquad \alpha \mapsto \tilde d_0 A^{-1} d_0^{-1}\alpha,$$
    will be a projection onto $\image \tilde d_0$ with kernel $\ker d_0^{-1}=\calC$, implying that $\calC$ is transverse to $\image \tilde d_0$.
    \item By (a) and Proposition~\ref{prop:TransInv}, it is sufficient to prove the result in the case where $\calT \subset \calC$ equals $\tilde \calT \subset \tilde \calC$. 
    Continuing with the notation in (a), we observe that $\tilde d_0^{-1} = A^{-1} d_0^{-1}$. Furthermore, if $\tilde P = \id - \tilde d_0^{-1} d - d \tilde d_0^{-1}$, then
    \begin{align*}
    \tilde \calP & = \{ \tilde P(\alpha) = \alpha  \, : \, \alpha \in K \}
    = \{ \alpha \in K \, : \, \tilde d_0^{-1} \alpha = 0 , \tilde d_0^{-1} d\alpha = 0\} \\
    & = \{ \alpha \in K \, : \, A^{-1} d_0^{-1} \alpha = 0 , A^{-1} d_0^{-1} d\alpha = 0\} =  \calP
    \end{align*}
    since $A^{-1}$ is an invertible map of $\calT$ to itself. The result follows.
\end{enumerate}
\end{proof}

\subsection{Proof of Theorem~\ref{th:main}}
We first prove (a). The existence of a splitting $K = \calT \oplus d\calT \oplus \calP$ follows from the existence of a base differential $d_0$ of $(K,d)$ as shown in Proposition~\ref{prop:existence}. The property $dK_j \subset d\calT +K_{j+1}$ is a direct consequence of the fact that for any $\alpha \in K_j$,
\begin{align*}
d\alpha & = d_0 \alpha \bmod K_{j+1} = d_0\beta \bmod K_{j+1} = d\beta \bmod K_{j+1}\ ,\text{ where }\beta =d^{-1}_0 d_0 \alpha \in \calT\ .
\end{align*}
Conversely, assume that we have a decomposition $K = \calT \oplus d \calT \oplus \calP$ as described in Theorem~\ref{th:main}. We will show that this decomposition then also corresponds to a base differential. Let $\phi: K \to \gr(K)$ be any splitting with $d_0$ the corresponding base transversal as in Example~\ref{ex:phiTod0}.
By our assumptions on $\calT$, for any $\alpha \in K_j$ there exists a unique $\beta \in \calT$ such that
$$d \alpha = d_{0} \alpha \bmod K_{j+1} = d\beta \bmod K_{j+1} = d_{0}\beta \bmod K_{j+1}.$$
Hence, $\calT$ is a base transversal for the base differential $d_{0}$. By repeating the same reasoning of Corollary~\ref{cor:d0Decomp}, we get that $K = \calT \oplus d_{(0)}\calT \oplus \calP$, and so $\calC = \calT \oplus \calP$ gives a base cotransversal for $d_{(0)}$. 

Once we define a partial inverse map $d_{0}^{-1}\colon\mathrm{image}\,d_{0}\to\mathcal{T}$, the map $P^\infty = (\id - d_{0}^{-1} d - d d_{0}^{-1})^S\colon K\to \mathcal{P}$ gives us an isomorphism at the level of cohomology. Finally, the subcomplex $(\mathcal{P},d\vert_{\mathcal{P}})$ we have obtained in this way is isomorphic to any other base complex by Proposition~\ref{prop:Invariance}. This completes the proof of (a)

For (b), let the splitting $\phi:K \to \gr(K)$ and decompositions $K = \calT \oplus d \calT \oplus \calP$ be given. We let $d_0$ be the base differential of $\phi$ as in Example~\ref{ex:phiTod0}, and we have just shown that we can use $\calT$ as transversal and $\calC = d_0 \oplus \calP$ as cotransveral for this $d_0$. By definition of $d_0$, we have $d_{\gr} \phi = \phi d_0$, so $\phi$ is an isomorphism of differential complexes. There exists now a unique map $I_\phi: \calP \to \calT$ given by $I_\phi(\alpha) = d_0^{-1} d_0 \alpha$ such that $d_0 (\alpha - I_\phi \alpha) =0$, which is equivalent to $d_{\gr}\phi(\alpha- I_\phi \alpha) = 0$. We remark that the dependence of $I_\phi$ on $\phi$ is through the choice of base differential.

Next, we observe from the identities $d_0 \calT = \image d_0$ and $\ker d_0 = d_0 \calT \oplus \calE_0$ from Section~\ref{subsection Rumin}, it follows that each cohomology class of $d_0$ is in one-to-one correspondence between $\calE_0$. Since $\alpha - I_\phi \alpha = \Pi_0(\alpha)$, it follows that $\alpha \mapsto [\phi(\Pi_0(\alpha))]$ is an invertible map. If $\alpha \in \calP$ is closed/exact relative to $d$, then $d_0 \alpha =0$ also, so $\Pi_0(\alpha) = \alpha$ and hence the corresponding cohomology class is on the form $[\phi(\alpha)]$ for some closed/exact form. Conversely, assume that we have
$$[\phi(\Pi_0(\alpha))] = [\phi(\beta)]$$
for some $d_0$-closed $\beta \in K$ such that $\beta$ is also $d$-closed/exact. This means that there exists some $\gamma \in K$ such that
\begin{equation} \label{ClosedExact1} \alpha - d_0^{-1} d_0 \alpha = \beta + d_0 \gamma,\end{equation}
but since the left side vanishes under application of $d_0^{-1}$, so must the right hand side and we have $d_0 \gamma =- d_0d^{-1} \beta$, letting us rewrite \eqref{ClosedExact1} as
$$\Pi_0(\alpha) = \Pi_0(\beta).$$
Applying now $P^\infty$ on both sides, it follows that $\alpha = P^\infty(\beta)$ and since $P^\infty$ is a homomorphism of differential complexes, $\alpha$ is closed/exact when $\beta$ is closed/exact. This completes the proof of (b). $\Box$

We note from the last part of the proof that we get the following explicit identity
\begin{corollary}
If $\phi:K \to \gr(K)$ is any splitting and if $d_0$ is defined relative to this splitting, then for any element $[\phi(\beta)]\in H(\gr(K))$, where $\beta$ is some $d_0$-closed form, then
$$[\phi(\beta)] = [\phi(\Pi_0 P^\infty(\beta)))]$$
\end{corollary}
Hence, if we want to see if there are any closed/exact elements in the equivalence class $[\phi(\beta)]$, all we need to do is to check if $P^\infty(\beta)$ is closed/exact.

\section{Examples} \label{sec:Examples}
We consider the following examples of filtered complexes $(K,d)$ and their relative base complexes $(\mathcal{P},d)$ to illustrate our theory on some known structures.

\subsection{Manifolds with a negative filtration of the tangent bundle}\label{subsection 3.1}
Let $M$ be a connected manifold. Assume that we have a filtration on the tangent bundle
$$TM = TM_{-s} \supset TM_{-s+1} \supset \cdots \supset TM_{-2} \supset TM_{-1}\supset TM_0 =0\,,$$
where we are assuming $TM_{-s-k}=TM_{-s}=TM$ for any $k\ge 0$.
This gives us a filtration
$$\bigwedge TM  = (\bigwedge TM)_{-S} \supset (\bigwedge TM)_{-S+1} \supset \cdots \supset (\bigwedge TM)_{-1} \supset (\bigwedge TM)_{0}\supset 0\ ,$$
\begin{align}\label{def homogeneous dimension}
    \text{where }S = \sum_{j=1}^s j \left(\rank (TM)_{-j} - \rank (TM)_{-j+1}\right)\,,
\end{align}
defined by imposing $\chi_1 \wedge \chi_2\in  (\bigwedge TM)_{-i-j}$, if $\chi_1 \in (\bigwedge TM)_{-i}$ and $\chi_2 \in (\bigwedge TM)_{-j}$. We remark that from this convention $M \times \mathbb{R} = \bigwedge^0 TM$ is in $(\bigwedge TM)_{0}$, but not in $(\bigwedge TM)_{-1}$. 

By duality, we introduce $(\bigwedge T^\ast M)_{j}$ defined as the subspace of the elements in $\bigwedge T^*M$ that vanish on $(\bigwedge TM)_{-j+1}$, for any $j\ge 1$. We will denote by $\Omega_j$ the space of all smooth sections of $(\bigwedge T^\ast M)_{j}$. Just like in the case of the filtration on $\bigwedge TM$, $\Omega_0$ is the space of all smooth forms, so that $0$-forms $\Omega^0=C^\infty(M)$ will belong to $\Omega_0$ but not $\Omega_1$.

It is worth mentioning that, by definition, the filtration on $\bigwedge TM$ gives rise to a dual decreasing filtration on $\bigwedge T^*M$
\begin{align*}
    \bigwedge T^*M=(\bigwedge T^*M)_0\supset(\bigwedge T^*M)_1\supset\cdots\supset (\bigwedge T^*M)_S\supset(\bigwedge T^*M)_{S+1}=0\,,
\end{align*}
where $S$ is the index given in \eqref{def homogeneous dimension}. This filtration then naturally extends to the space of smooth forms as
\begin{align*}
\Omega=\Omega_0\supset\Omega_1\supset\cdots\supset\Omega_{S}\supset\Omega_{S+1}=0\,.
\end{align*}
 
We introduce the following crucial assumption on our filtration: If $\Gamma(TM_{-j})$ denotes the the sections of $TM_{-j}$, then for any $i,j=0,1,\dots, s$
\begin{equation}
    \label{A} \tag{A} \text{if } X\in \Gamma(TM_{-i}) \text{ and } Y \in \Gamma(TM_{-j}) \, \text{ then }[X,Y] \in \Gamma(TM_{-i-j}).
\end{equation}
\begin{lemma} \label{lemma:DifferentialIdeal}
If \eqref{A} holds, then for $j =0,1, \dots, S$, we have $d\Omega_j \subset \Omega_j$.
\end{lemma}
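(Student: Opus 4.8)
The plan is to check the inclusion $d\Omega_j\subset\Omega_j$ pointwise, via Koszul's formula for the exterior derivative. If $\omega\in\Omega_j$ is a $k$-form, then $d\omega$ is a $(k+1)$-form, and since $d\omega$ is tensorial, to show $d\omega\in\Omega_j$ it suffices to show that at each $p\in M$ it kills every decomposable $(k+1)$-vector in the fibre of $(\bigwedge TM)_{-j+1}$; by the definition of the induced filtration on $\bigwedge TM$, such a multivector is of the form $v_0\wedge\cdots\wedge v_k$ with $v_a$ in the fibre of $TM_{-i_a}$ and $i_0+\cdots+i_k\le j-1$. Fixing such $p$ and $v_0,\dots,v_k$, I would extend each $v_a$ to a local section $X_a\in\frakX_{-i_a}$ — possible because the $TM_{-i}$ are subbundles — and compute
\begin{align*}
d\omega(X_0,\dots,X_k)={}&\sum_{a=0}^{k}(-1)^a X_a\,\omega(X_0,\dots,\widehat{X_a},\dots,X_k)\\
&{}+\sum_{0\le a<b\le k}(-1)^{a+b}\,\omega\bigl([X_a,X_b],X_0,\dots,\widehat{X_a},\dots,\widehat{X_b},\dots,X_k\bigr)\,.
\end{align*}

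In the first sum, omitting an argument only lowers the total filtration degree, so $(X_0,\dots,\widehat{X_a},\dots,X_k)$ is a tuple of sections of filtration degrees summing to $\sum_{c\ne a}i_c\le\sum_c i_c\le j-1$; the defining property of $\Omega_j$ then makes $\omega(X_0,\dots,\widehat{X_a},\dots,X_k)$ vanish identically, and hence so does its derivative along $X_a$. In the second sum, assumption \eqref{A} gives $[X_a,X_b]\in\frakX_{-i_a-i_b}$, so the arguments of $\omega$ are sections of filtration degrees summing to $(i_a+i_b)+\sum_{c\ne a,b}i_c=\sum_c i_c\le j-1$, and $\omega$ annihilates them as well. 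Hence $d\omega(X_0,\dots,X_k)(p)=0$, which is what we wanted. The extreme cases are automatic: $\Omega_0=\Omega$ (the fibre of $(\bigwedge TM)_1$ is $\{0\}$), and for $j=1$ the constraint $\sum_c i_c\le 0$ admits no nonempty tuple (each $i_c\ge1$), so $\Omega_1$ is the space of forms of positive degree, which $d$ preserves.

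I do not expect a real obstacle here. The two points that need attention are the reduction to the pointwise statement — legitimate because $d\omega$ is tensorial and, the $TM_{-i}$ being subbundles, every tangent vector at $p$ is realized by a section of the right $\frakX_{-i}$ — and the bookkeeping of filtration degrees in Koszul's formula. The one essential input is \eqref{A}: it is precisely what forces $[X_a,X_b]$ into filtration degree $i_a+i_b$ rather than merely into $\frakX_{-s}$, which is exactly what makes the bracket sum vanish; without \eqref{A} the argument breaks.
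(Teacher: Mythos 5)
Your proof is correct and follows essentially the same route as the paper's: both apply the Koszul formula for $d$ to arguments drawn from the filtered subbundles, observe that the first sum dies because omitting an argument only lowers the total filtration weight so $\omega$ already annihilates it identically, and use assumption \eqref{A} to keep the bracket term at the same total weight so the second sum dies too. The only difference is cosmetic — you make the pointwise reduction and the extension of vectors to sections of $TM_{-i}$ explicit, which the paper leaves implicit.
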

\begin{proof}For $j=0$, the claim is trivial. Let $\alpha\in\Omega_j$ be a $k$-form with $j\ge 1$ and prove that also $d\alpha\in\Omega_j$.
For any $k+1$ smooth vector fields $X_1,\ldots,X_{k+1}$ on $M$, the formula for the exterior differential on $\alpha$ is given by
\begin{equation}\label{formula exterior derivative}
\begin{aligned}
    d\alpha (X_1\wedge\cdots\wedge X_{k+1})=&\sum_{1\le i\le k+1}(-1)^{i-1}X_i\big(\alpha(X_1\wedge\cdots\wedge\hat{X}_i\wedge\cdots\wedge X_{k+1})\big)+\\+&\sum_{1\le i<l\le k+1}(-1)^{i+l}\alpha\big([X_i,X_l]\wedge X_1\wedge\cdots\wedge\hat{X}_i\wedge\cdots\wedge\hat{X}_l\wedge\cdots\wedge X_{k+1}\big)\,,
\end{aligned}
\end{equation}
where the hat-symbol is used to indicate the terms omitted. If $X_1 \wedge \cdots \wedge X_{k+1}$ is a section of $(\bigwedge TM)_{-j+1}$, then so is $X_1 \wedge \cdots \wedge \hat X_i \wedge \cdots \wedge X_{k+1}$ for $i=1, \dots, k+1$, and by assumption \eqref{A} this is the case for $[X_i, X_l] \wedge X_1 \wedge \cdot \wedge \hat X_i \wedge \cdots \wedge \hat X_i \wedge \cdots \wedge X_l \wedge \cdots  \wedge X_{k+1}$, $i,l=1, \dots, s$. We hence have that both terms vanish in \eqref{formula exterior derivative}.
\end{proof}

Given the decreasing filtration over the space of smooth forms, it is possible to express $\Omega$ as a direct sum decomposition $\Omega = \Omega_{(0)} \oplus \cdots \oplus \Omega_{S}$, where the subspaces $\Omega_{(j)}$ are taken so that $\Omega_j = \Omega_{(j)} \oplus \Omega_{j+1}$. We observe that if $\alpha \in \Omega_j$, then
$$d(f\alpha) = f d\alpha  \mod\Omega_{j+1}.$$
Hence, we can define a $C^\infty(M)$-linear map $d_0\colon\Omega_j\to\Omega_j$ such that, for any $\alpha \in\Omega_{(j)}$, 
$$d_0\alpha:=d\alpha\mod\Omega_{j+1}\ \text{ so that } \ d_0\alpha \in\Omega_{(j)}\,, \text{ and }\ (d-d_0) \alpha  \in \Omega_{j+1}\ .$$
By tensoriality, the map $d_0\colon\Omega\to\Omega$ induces a vector bundle morphism $\check{d}_0\colon\bigwedge T^*M\to\bigwedge T^*M$ for which $\check{d}_0^2=0$. We can then construct subbundles $\check{\mathcal{T}}\subset\check{\mathcal{C}}\subset\bigwedge T^*M$ so that
\begin{align*}
    \bigwedge T^*M=\ker\check{d}_0\oplus\check{\mathcal{T}}=\mathrm{image\,}\check{d}_0\oplus\check{\mathcal{C}}.
\end{align*}
It is therefore sufficient to take the spaces of smooth sections $\mathcal{T}\subset\mathcal{C}$ of $\check{\mathcal{T}}\subset\check{\mathcal{C}}$ to obtain a base transversal and cotransversal for $d_0\colon\Omega\to\Omega$. Indeed, the map $d_0\colon\Omega\to\Omega$ is a base differential for the de Rham complex $(\Omega,d)$, since for any $\alpha\in \Omega_j$, we have $(d-d_0)\alpha \in\Omega_{j+1}$. 

These definitions lead to well-defined base complexes $(\mathcal{P},d\vert_\mathcal{P})$ as in Theorem \ref{th:main}, and $(\mathcal{E}_0,d_c)$ as in Section \ref{subsection Rumin}. In particular, the latter subcomplex coincides with the complex defined by Rumin in \cite{Rumin3} and \cite{Rumin4}, where
\begin{enumerate}[$\bullet$]
\item grading of the tangent bundle is defined so that $TM_{-j-1}$ is spanned by $TM_{-j}$ brackets of vector fields with one taking value in $TM_{-j}$ and the other in $TM_{-1}$;
\item the subspaces $\mathcal{T}$ and $\mathcal{C}$ are defined by taking the orthogonal complements with respect to an auxiliary Riemannian metric on $M$ of $\ker d_0$ and $\mathrm{image\,}d_0$ respectively.
\end{enumerate}

\subsubsection{Application: Differentiating partial forms} \label{sec:Application} Assume for the sake of simplicity that $M$ is simply connected. Let $\eta \in \Gamma(TM_{-1}^*)$ be a one-form defined only on the subbundle $TM_{-1}$. Even though we do not have a well-defined differential, we have an idea of what closed should mean for such forms; the integral of $\eta$ along any curve should only depend on its endpoints. We can only integrate along curves that are tangent to $TM_{-1}$, but if $TM_{-1}$ satisfies the bracket-generating condition, meaning that vector fields that takes values in it and their iterated brackets span the tangent bundle, all curves tangent to $TM_{-1}$ can reach any point of the manifold \cite{Ras38,Cho39}. Hence, for the latter case, we are interested in when $\eta \in \Gamma(TM_{-1}^*)$ is a closed form. Identify $\Gamma(TM_{-1}^*)$ with $\Omega^1_1/\Omega^1_2$ and consider this as a subspace of $\gr(\Omega)^1$, where we have extended by zero in the other components. Furthermore, we can identify $(\ker d_0)^1$ with $H^1(\gr(\Omega))$ since $d_0|_{\Omega^0}$ =0. Let $\phi:\Omega \to \gr(\Omega)$ be any splitting. The first test is then to check if $d_{\gr} \eta =0$ where $d_{\gr}$ is defined with $\phi$, as this is a necessary condition. Given that this holds, it is now sufficient to check that
$$dP^\infty\phi^{-1} \eta =0$$
to show that integrals along $\eta$ are indeed independent of paths.

\begin{example}
For $\kappa \in \mathbb{R}$, let $G_\kappa$, be the simply connected Lie group with Lie algebra $\frakg_\kappa = \spn_{\mathbb{R}} \{ X, Y, Z \}$ with brackets
$$[X,Y] = Z, \qquad [Y,Z] = \kappa X, \qquad [X,Z] = -\kappa Y.$$
We will also use the symbols $X, Y, Z$ for the corresponding left invariant vector fields, with a dual frame of forms $X^*$. $Y^*$, $Z^*$. We define $TM_{-1} = \spn \{ X,Y\}$ and $TM_{-2} = TM$. Let $\eta = (\eta^1 X^* + \eta^2 Y^*)|_{TM_{-1}}$ be a partial one-form. We want to check if $\eta$ can be extended to a closed one-form. We can define $d_0: \Omega \to \Omega$ as a tensorial map respecting the wedge product and satisfying
$$d_0 X^* = 0, \qquad d_0 Y^* = 0, \qquad d_0Z^* = - X^* \wedge Y^*.$$
In other words, $d_0 = Y^* \wedge X^* \wedge \iota_Z$, where $\iota_Z$ denotes the contraction. If we choose transversal $\calT = \spn_{C^\infty} Z^*$ and cotransversal $\calC = \spn_{C^\infty} \{ X^*, Y^*, Z^*, X^* \wedge Z^*, Y^* \wedge Z^*, X^* \wedge Y^* \wedge Z^*\}$, then $d_0^{-1} = Z^* \wedge \iota_X \iota_Y$. This gives us that the partial one-form is the restriction of a closed form if and only if
$$P^\infty(\eta) = P(\eta) = (\id - d^{-1}_0 d - d d_0^{-1})(\eta^1 X^* + \eta^2 Y^*) = \eta^1 X^* + \eta^2 Y^* - (Y\eta^1 - X\eta^2) Z^* $$
is closed. We will study forms on these spaces more in Section~\ref{last section}.
\end{example}

\subsection{Graded Lie algebras}\label{sec:GradedLie}
Let $\frakg = \frakg_{-s} \oplus \cdots \oplus \frakg_{-1} \oplus \frakg_{0}$ be a non-positively graded Lie algebra with the Lie bracket is a degree zero operator. This grading induces a non-negative grading on $\frakg^*$ and then furthermore on the exterial algebra $\wedge \frakg$. Let $\rho$ be a representation of $\frakg$ on a vector space $V$. We introduce a complex $K = \oplus_{k=0}^{n} K^k$, $K^k = \wedge^k \mathfrak{g}^* \otimes V$ with filtration
$$K = K_0 \supset K_1 \supset \cdots \supset K_S \supset K_{S+1} =0, \qquad K_j = ( \wedge \frakg^* )_{i \geq j} \otimes V.$$
Introduce the Lie algebra cohomology differential corresponding $d_\rho$ on $K$ corresponding to $\rho$, which is defined by
\begin{align*}
   d_\rho \alpha(X_1, \dots, X_{k+1}) & = \sum_{1\le i\le k+1} (-1)^{i-1} \rho(X_i) \alpha(X_1, \dots, \hat X_i, \dots, X_{k+1}) \\
    & \qquad + \sum_{1\le i<l<k+1} (-1)^{i+l} \alpha([X_i,X_l], X_1, \dots, \hat X_i, \dots, \hat X_l, \dots, X_{k+1})\ .
\end{align*}
with $X_1, \dots, X_{k+1} \in \frakg$. Let $d_0$ be defined in a similar way, but with respect to the trivial representation on $V$. Then $d^2_\rho =0$ and $d_0^2 =0$ and $d_0$ is a base differential of $d$. To define transversal and cotransversal, we can for example define an inner product on $K$ and define $\calT = (\ker d_0)^\perp$ and $\calC = (\ker d_0)$. This complex is used for finding unique ways to extend partial Cartan connections in full Cartan connections in \cite{GrSl25}.

\section{Spectral methods for the filtered subcomplex} \label{sec:SpectralSeq}
In many of the examples considered in Section~\ref{sec:Examples}, not only do we have a finite depth filtration of our complex $(K,d)$ as in \eqref{filteredK}, but the complex is also given an additional grading $(K^k)$, $k=0$,$1$, $\dots$. 
This is indeed the case for our examples in Section~\ref{sec:Examples}, where an element belongs to $K^k$ if it is a $k$-form. We observe that the grading satisfies 
\begin{equation} \label{GradingCond}
dK^k \subseteq K^{k+1}, \qquad K^k \subset K_k, \qquad k \in \mathbb{N}_0\ .
\end{equation}
A very natural question arises about the spectral sequences that can be constructed from such a filtration. 

It is possible to endow the complex $(K,d)$ with a bicomplex structure by considering $d=d_0+(d-d_0)$, since it satisfies the relations $d_0^2=d_0(d-d_0)+(d-d_0)d_0=(d-d_0)^2=0$, and consequently study its associated spectral sequence. This construction mimics more closely the approach used in the previous section to extract the base subcomplex $(\mathcal{P},d)$. However, we will see that, just like in the simpler case of Carnot groups \cite{lerario2023multicomplexes}, any filtered complex of finite depth is also a multicomplex, a generalisation of the notion of a (graded) chain complex and that of a bicomplex. As shown in \cite{livernet2020spectral}, multicomplexes carry extra algebraic structure that allows us to obtain an explicit formulation of the differentials that arise from the spectral sequence obtained from this finite filtration. In this way, we are also able to compare these differentials with the projection map $P^\infty$ introduced in Section~\ref{sec:Filtered}.

\subsection{Spectral sequence associated to a multicomplex}
We will first include the basic definitions that we will use for the remainder of the section.
\begin{definition}\label{def: multicomplex} A multicomplex, also called a twisted (co)chain complex, is a $(\mathbb Z,\mathbb Z)$-graded $k$-module $C$ equipped with maps $d_i\colon C\to C$ for $i\ge 0$ of bidegree $\vert d_i\vert=(i,1-i)$ such that
\begin{align}
    \sum_{i+j=l}d_id_j=0\ \text{ for all }\ l\ge 0\,.
\end{align}
    For $C$ a multicomplex and $(a,b)\in\mathbb Z\times\mathbb Z$, we write $C_{a,b}$ for the $k$-module in bidegree $(a,b)$.
\end{definition}
Given a multicomplex $C$, one could consider different possible total complexes, however we will focus on the following one.
\begin{definition}
    For a multicomplex $C$, its associated total complex $\mathrm{Tot}\,C$ is the chain complex with
    \begin{align*}
    \left(\mathrm{Tot}\,C\right)_k=\bigg(\prod_{\substack{a+b=k\\ a\le 0}}C_{a,b}\bigg)\oplus\bigg(\bigoplus_{\substack{a+b=k\\ a>0}}C_{a,b}\bigg)=\bigg(\bigoplus_{\substack{a+b=k\\ b\le 0}}C_{a,b}\bigg)\oplus\bigg(\prod_{\substack{a+b=k\\ b>0}}C_{a,b}\bigg)\,.
    \end{align*}
    The differential on $\mathrm{Tot}\,C$ is given for an arbitrary element $c\in\big( \mathrm{Tot\,C}\big)_k$ by
    \begin{align}\label{formula 5}
        (dc)_a=\sum_{i\ge 0}d_i(c)_{a-i}\,,
    \end{align}
    where $(c)_a$ denotes the projection of $c\in\big(\mathrm{Tot}\,C\big)_k$ to $C_{a,\bullet}=\prod_{b,a+b=k}C_{a,b}$.
\end{definition}
In general, when working with $(\mathrm{Tot}\,C)_k$ it is not always possible to consider the direct product total complex $\prod_{a+b=k}C_{a,b}$, as the formula \eqref{formula 5} may involve infinite sums. However, since in this paper we are working on a complex of finite depth, the associated total complex takes the simpler form
\begin{align}\label{formula 6}
    \big(\mathrm{Tot}\,C\big)_k=\bigoplus_{\substack{a+b=k\\ a\ge 0}}C_{a,b}\ , \ \text{ with differential }(dc)_a=\sum_{i=0}^Sd_i(c)_{a-i}\,.
\end{align}
\begin{definition}
    Given a multicomplex $C$ and its associated total complex $\mathrm{Tot}\,C$, we can define for each degree $k$ the following subcomplexes
    \begin{align*}
        \big(\mathcal{F}_p\mathrm{Tot\,}C\big)_k=\bigoplus_{\substack{a+b=k\\ a\ge p}}C_{a,b}\,.
    \end{align*}
    By definition, as the value of $p$ varies, the subcomplexes $\mathcal{F}_p\mathrm{Tot}\,C$ form a filtration of $\mathrm{Tot}\,C$ of finite depth:
    \begin{align*}
        \mathrm{Tot}\,C=\mathcal{F}_0\mathrm{Tot}\,C\supset\mathcal{F}_1\mathrm{Tot}\,C\supset\cdots\supset\mathcal{F}_S\mathrm{Tot}\,C\supset\mathcal{F}_{S+1}\mathrm{Tot}\,C=0\,.
    \end{align*}
\end{definition}
Let us now consider the spectral sequence associated with this filtered complex $\mathcal{F}_p\mathrm{Tot}\,C$. For $r\ge 0$, the $r^{th}$-page of the spectral sequence is a bigraded module $E_r^{p,\bullet}(\mathrm{Tot}\,C)$ with a map $\delta_r$ of bidegree $(r,1-r)$ for which $\delta_r\circ \delta_r=0$. Moreover, we have that the spaces $E_r^{p,\bullet}(\mathrm{Tot}\,C)$ can be expressed as the quotients
\begin{align*}
    E_r^{p,\bullet}(\mathrm{Tot}\,C)\cong\mathcal{Z}_r^{p,\bullet}(\mathrm{Tot}\,C)/\mathcal{B}^{p,\bullet}_r(\mathrm{Tot}\,C)
\end{align*}
where the $r$-cycles are given by
\begin{align*}
    \mathcal{Z}_r^{p,\bullet}(\mathrm{Tot}\,C):= \left\{ c \in \mathcal{F}_p\mathrm{Tot}\,C \, : \, d\alpha \in \mathcal{F}_{p+r}\mathrm{Tot}\,C \right\} 
\end{align*}
and the $r$-boundaries are given by
\begin{align*}
    \begin{cases}
        \mathcal{B}_0^{p,\bullet}(\mathrm{Tot}\,C):=\mathcal{Z}_0^{p+1,\bullet}(\mathrm{Tot}\,C)\ \text{
         and 
        }\\ \mathcal{B}_r^{p,\bullet}(\mathrm{Tot}\,C):=\mathcal{Z}_{r-1}^{p+1,\bullet}(\mathrm{Tot}\,C)+d\mathcal{Z}_{r-1}^{p-(r-1),\bullet}(\mathrm{Tot}\,C)\ \text{
         for 
        }r\ge 1\,.
    \end{cases}
\end{align*}
Given an element $z\in\mathcal{Z}_r^{p,\bullet}(\mathrm{Tot}\,C)$, we will denote by $[x]_r$ its image in $E^{p,\bullet}_r(\mathrm{Tot}\,C)$, so that
\begin{align*}
    \delta_r([x]_r)=[dx]_r\ ,\ \text{
     for any 
    }[x]_r\in E^{p,\bullet}_r(\mathrm{Tot}\,C)\,.
\end{align*}
Expanding on the expressions for $\mathcal{Z}_r^{p,\bullet}(\mathrm{Tot}\,C)$ and $\mathcal{B}_r^{p,\bullet}(\mathrm{Tot}\,C)$, one can introduce the following definition.
\begin{definition}Let $x\in C_{p,\bullet}$ and let $r\ge 1$. We define the graded submodules $Z_r^{p,\bullet}$ and $B^{p,\bullet}_r$ of $C_{p,\bullet}$ as follows:
\begin{align}
    x\in Z_r^{p,\bullet}\ \Longleftrightarrow & \begin{array}{c} \text{
     for 
    }1\le j\le r-1\ \text{ there exists } z_{p+j}\in C_{p+j,\bullet}\ \text{ such that } \\ d_0x=0\ \text{
     and 
    }\ d_mx=\sum_{i=0}^{m-1}d_iz_{p+m-i}\ \text{ for all }\ 1\le m\le r-1
    \end{array}
    \label{4,4}
    \\ \nonumber \\
    x\in B_r^{p,\bullet}\ \Longleftrightarrow & \begin{array}{c} \text{ for }1\le k\le r-1 \ \text{ there exists }c_{p-k}\in C_{p-k,\bullet} \ \text{ such that} \\ 
        x=\sum_{k=0}^{r-1}d_kc_{p-k}\ \text{
         and 
        }\\0=\sum_{k=l}^{r-1}d_{k-l}c_{p-k}\ \text{
         for 
        }\ 1\le l\le r-1\,.
    \end{array}
\end{align}
    
\end{definition}
As shown already in \cite{livernet2020spectral} and \cite{lerario2023multicomplexes}, the following holds.
\begin{proposition}
    The map
    \begin{align*}
        \psi\colon\mathcal{Z}_r^{p,\bullet}(\mathrm{Tot\,}C)/\mathcal{B}_r^{p,\bullet}(\mathrm{Tot}\,C)\longrightarrow Z_r^{p,\bullet}/B_r^{p,\bullet}
    \end{align*}
    sending $[x]_r$ to the class $[(x)_p]$ is a well-defined isomorphism. Moreover, under this isomorphism, the $r^{th}$-differential of the spectral sequence corresponds to the following map
    \begin{align*}
        \partial_r\colon Z_r^{p,\bullet}/B_r^{p,\bullet}\to Z_r^{p+r,\bullet}/B_r^{p+r,\bullet}\ ,\ \partial_r\big([x]\big)=\bigg[d_rx-\sum_{i=1}^{r-1}d_iz_{p+r-i}\bigg]
    \end{align*}
    where $x\in Z_r^{p,\bullet}$ and the elements $z_{p+j}\in C_{p+j,\bullet}$ satisfy \eqref{4,4}.
\end{proposition}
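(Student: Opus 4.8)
The plan is to prove this in two stages. First I would establish that the map $\psi\colon\mathcal{Z}_r^{p,\bullet}(\mathrm{Tot}\,C)/\mathcal{B}_r^{p,\bullet}(\mathrm{Tot}\,C)\to Z_r^{p,\bullet}/B_r^{p,\bullet}$ given by $[x]_r\mapsto[(x)_p]$ is well-defined and bijective; then I would verify that it intertwines $\delta_r$ with $\partial_r$. For the first stage, the key observation is that an element $c\in\mathcal{F}_p\mathrm{Tot}\,C$ is determined by its components $(c)_{p},(c)_{p+1},\dots,(c)_S$, and the condition $dc\in\mathcal{F}_{p+r}\mathrm{Tot}\,C$ unwinds degree-by-degree using \eqref{formula 6}: writing $x=(c)_p$ and $z_{p+j}=(c)_{p+j}$, the vanishing of $(dc)_{p},(dc)_{p+1},\dots,(dc)_{p+r-1}$ is exactly the system $d_0x=0$ and $d_mx=\sum_{i=0}^{m-1}d_iz_{p+m-i}$ for $1\le m\le r-1$ appearing in \eqref{4,4}. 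This shows that projection to the $p$-th component sends $\mathcal{Z}_r^{p,\bullet}(\mathrm{Tot}\,C)$ onto $Z_r^{p,\bullet}$; surjectivity follows because given $x\in Z_r^{p,\bullet}$ one can \emph{choose} witnesses $z_{p+j}$ and complete them (arbitrarily, e.g.\ by zero) in degrees $\ge p+r$ to build a preimage in $\mathcal{F}_p\mathrm{Tot}\,C$. The analogous degree-by-degree expansion of the definition of $\mathcal{B}_r^{p,\bullet}(\mathrm{Tot}\,C)=\mathcal{Z}_{r-1}^{p+1,\bullet}+d\mathcal{Z}_{r-1}^{p-(r-1),\bullet}$ matches the two-part description of $B_r^{p,\bullet}$: the $d\mathcal{Z}_{r-1}^{p-(r-1),\bullet}$ part contributes, for $c$ supported from degree $p-(r-1)$ on with $c_{p-k}=(c)_{p-k}$, the terms $\sum_{k=0}^{r-1}d_kc_{p-k}$ in degree $p$ together with the constraints $\sum_{k=l}^{r-1}d_{k-l}c_{p-k}=0$ coming from $c\in\mathcal{Z}_{r-1}^{p-(r-1),\bullet}$, while the $\mathcal{Z}_{r-1}^{p+1,\bullet}$ summand maps into lower filtration degree and so contributes nothing to the $p$-th component. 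Hence $\psi$ is a well-defined isomorphism on the quotients.

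For the second stage, recall $\delta_r([x]_r)=[dx]_r$, so I must show that for $x\in\mathcal{Z}_r^{p,\bullet}(\mathrm{Tot}\,C)$ the $(p+r)$-th component of $dx$ equals, modulo $B_r^{p+r,\bullet}$, the expression $d_rx-\sum_{i=1}^{r-1}d_iz_{p+r-i}$ where now $x$ and the $z_{p+j}$ denote the components of the chosen representative. By \eqref{formula 6}, $(dx)_{p+r}=\sum_{i=0}^{r}d_i\,(x)_{p+r-i}=d_0z_{p+r}+\sum_{i=1}^{r-1}d_iz_{p+r-i}+d_rx$ (with $(x)_p=x$ in the notation of \eqref{4,4}). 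The term $d_0z_{p+r}$ lies in $B_r^{p+r,\bullet}$ — indeed it is a $d_0$-boundary, the simplest kind — and the sign of the remaining $\sum_{i=1}^{r-1}d_iz_{p+r-i}$ terms is fixed by moving them to the other side, giving $\partial_r([x])=[d_rx-\sum_{i=1}^{r-1}d_iz_{p+r-i}]$ after absorbing $d_0z_{p+r}$; here one must also check that this expression indeed lies in $Z_r^{p+r,\bullet}$, which follows from $d\circ d=0$ unwound degree-by-degree, as is done in \cite{livernet2020spectral}.

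The main obstacle I anticipate is purely bookkeeping: carefully matching the index ranges and signs between the ``total complex'' side (where everything is packaged into a single differential $d$ via \eqref{formula 6}) and the ``component'' side (where the multicomplex maps $d_i$ appear with explicit witnesses $z_{p+j}$ and the boundary conditions on $c_{p-k}$), and checking that changing the choice of witnesses $z_{p+j}$ only alters $\partial_r([x])$ by an element of $B_r^{p+r,\bullet}$ — that is, well-definedness of $\partial_r$ independent of the representative and of the auxiliary $z$'s. Since this proposition is explicitly attributed to \cite{livernet2020spectral} and \cite{lerario2023multicomplexes}, the cleanest route is to cite those references for the isomorphism $\psi$ and the formula for $\partial_r$, and limit the written proof to indicating how the finite-depth hypothesis \eqref{filteredK} makes the total complex take the simplified form \eqref{formula 6} so that their results apply verbatim; no genuinely new argument is needed beyond this reduction.
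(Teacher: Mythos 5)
Your proposal is correct and ends up exactly where the paper does: the paper gives no proof of this proposition at all, simply attributing it to \cite{livernet2020spectral} and \cite{lerario2023multicomplexes}, which is precisely the citation route you recommend in your final paragraph. Your degree-by-degree unwinding of $\mathcal{Z}_r^{p,\bullet}(\mathrm{Tot}\,C)$ and $\mathcal{B}_r^{p,\bullet}(\mathrm{Tot}\,C)$ via \eqref{formula 6} is also sound as a self-contained argument, with the one bookkeeping caveat you already anticipate: matching \eqref{4,4} requires taking $z_{p+j}=-(c)_{p+j}$ (the conditions $(dc)_{p+m}=0$ give $d_mx=-\sum_{i=0}^{m-1}d_i(c)_{p+m-i}$), after which the leftover term $d_0(x)_{p+r}$ in $(dc)_{p+r}$ is absorbed into $B_r^{p+r,\bullet}$ exactly as you describe.
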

\subsection{Filtered complexes $(K,d)$ of finite depth as multicomplexes}
We apply the formalism of multicomplexes to a differential complex $(K,d)$ having both a filtration 
\begin{equation} \label{grading, direct sum decomposition} K_0 \supset K_1 \supset \cdots \supset K_S \supset K_{S+1} =0 \end{equation}
and a grading $K = K^0 \oplus \cdots \oplus K^n$ satisfying \eqref{GradingCond}.
\begin{proposition}
    Any filtered complex $(K,d)$ as described above is a multicomplex with maps $d_{(i)}\colon K\to K$ of bidegree $\vert d_{(i)}\vert=(i,1-i)$ with $i=0,\ldots,S$.
\end{proposition}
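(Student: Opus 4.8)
The plan is to take the grading-plus-filtration data and reindex it so that the statement of Definition~\ref{def: multicomplex} becomes literally true. First I would set up the bigrading on $K$: an element of $K^k$ that lies in $K_p$ but not $K_{p+1}$ should be placed in bidegree $(p, k-p)$. More precisely, using \eqref{GradingCond} we have $K^k \subset K_k$, so for each $k$ we can choose, as in the proof of Proposition~\ref{prop:existence}, a direct sum decomposition $K^k = K^k_{(k)} \oplus K^k_{(k+1)} \oplus \cdots \oplus K^k_{(S)}$ with $K^k \cap K_p = \oplus_{q \geq p} K^k_{(q)}$, and declare $C_{p,\,k-p} := K^k_{(p)}$. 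Note that the first index $p$ ranges over $0, \ldots, S$ and, for fixed total degree $k$, only $p \geq k$ (equivalently second index $\leq 0$) contributes once we also allow $p > k$? No — here $K^k \subset K_k$ forces $p \geq k$, so the second index $k - p$ is always $\leq 0$; in particular the total complex is an honest finite direct sum, matching \eqref{formula 6}, and $\mathrm{Tot}\, C$ recovers $(K, d)$ with its original filtration being exactly $\mathcal{F}_p \mathrm{Tot}\, C$.

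Next I would define the operators. Since $d K^k \subseteq K^{k+1}$ and $d K_p \subseteq K_p$, writing $\alpha \in K^k_{(p)}$ and decomposing $d\alpha \in K^{k+1} \cap K_p = \oplus_{q \geq p} K^{k+1}_{(q)}$ gives $d\alpha = \sum_{i \geq 0} d_{(i)} \alpha$ where $d_{(i)}\alpha \in K^{k+1}_{(p+i)} = C_{p+i,\, (k+1)-(p+i)}$. Thus $d_{(i)}$ has bidegree $(i, 1-i)$ as required, and it vanishes for $i > S$ because the filtration has depth $S$ (the first index never exceeds $S$, so $d_{(i)}$ applied to anything in $C_{p,\bullet}$ with $p + i > S$ lands in the zero module). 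This is exactly the decomposition $d = d_{(0)} + d_{(1)} + \cdots + d_{(S)}$ already used in the proof of Proposition~\ref{prop:existence} and in Section~\ref{sec:Examples}; I would just emphasize that it is now being read as a family of bidegree-homogeneous operators.

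The remaining point is the multicomplex relations $\sum_{i+j=l} d_{(i)} d_{(j)} = 0$ for all $l \geq 0$. I would deduce this from $d^2 = 0$ by comparing bidegrees: for $\alpha \in C_{p,\bullet}$, the component of $d^2\alpha = 0$ lying in $C_{p+l,\bullet}$ is precisely $\sum_{i+j=l} d_{(i)} d_{(j)}\alpha$, since $d_{(i)} d_{(j)}$ has bidegree $(i+j, 2-i-j)$ and maps $C_{p,\bullet}$ into $C_{p+i+j,\bullet}$. Because the $C_{p+l,\bullet}$ for distinct $l$ are independent summands of $K^{k+2} \cap K_p$, each such homogeneous component must vanish separately. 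This is the computation displayed in the proof of Proposition~\ref{prop:existence} (the underbraced expansion of $d^2$), now simply organized by bidegree rather than by filtration level.

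I do not expect a genuine obstacle here: the content is entirely bookkeeping, and every ingredient — the splitting $K^k = \oplus_{q} K^k_{(q)}$, the homogeneous decomposition of $d$, and the bidegree-wise vanishing of $d^2$ — has appeared earlier in the paper. The one place to be careful is the indexing convention: one must check that the bidegree assignment $(p, k-p)$ is consistent with the sign/degree conventions of Definition~\ref{def: multicomplex} (bidegree $(i, 1-i)$ for $d_i$, total degree $a+b = k$), and that the finite-depth hypothesis genuinely truncates the family at $i = S$ so that no completion issues arise and \eqref{formula 6} rather than the direct-product total complex is the relevant one. If anything needs a remark, it is only to note that the choice of the complements $K^k_{(q)}$ is non-canonical, so the multicomplex structure — like the base differential $d_0 = d_{(0)}$ itself — depends on auxiliary choices, consistently with the invariance statements of Propositions~\ref{prop:TransInv} and~\ref{prop:Invariance}.
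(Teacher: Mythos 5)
Your proposal is correct and follows essentially the same route as the paper: decompose each $K^k$ along the filtration into $K^k_{(p)}$, read off $d = d_{(0)} + \cdots + d_{(S)}$ as bidegree-homogeneous components, and extract the relations $\sum_{i+j=l} d_{(i)}d_{(j)} = 0$ from the homogeneous pieces of $d^2 = 0$. Your explicit bookkeeping of the bigrading convention $C_{p,\,k-p} = K^k_{(p)}$ and the observation that $K^k \subset K_k$ forces the second index to be nonpositive are slightly more detailed than the paper's write-up but add nothing essentially new.
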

\begin{proof}
    As already pointed out in \eqref{grading, direct sum decomposition}, we can express the complex $K$ as a direct sum decomposition on each degree. For each degree $k\in\mathbb N$, we have that
    \begin{align*}
        K^k=K_0^k=K_{(0)}^k\oplus K_{1}^k=K_{(0)}^k\oplus K^k_{(1)}\oplus K_2^k=K_{(0)}^k\oplus K^k_{(1)}\oplus\cdots\oplus K_{(S-1)}^k\oplus K^k_S\ .
    \end{align*}
   This direct sum decomposition of $K$ based on the filtration induces a decomposition of the original differential $d$, since $dK_j^k\subset K_j^{k+1}$. Indeed
   \begin{align*}
       d=d_{(0)}+d_{(1)}+\cdots+d_{(S)}\ \text{ where }\ d_{(i)}K_{(j)}^k\subset K_{(j+i)}^{k+1}\ ,\  i,j=0,\ldots,S\,.
   \end{align*}
   Finally, since $(K,d)$ is a complex, once we expand the formula $d^2=0$ and gather all the terms according to their respective subspace $K_{(j)}$ that they belong to, we get that for any $\alpha\in K_{(j)}^k$
   \begin{align*}
0=&d^2\alpha=\big(d_{(0)}+d_{(1)}+\cdots+d_{(S)}\big)\big(d_{(0)}+d_{(1)}+\cdots+d_{(S)}\big)\alpha\\=&\underbrace{d_{(0)}^2\alpha}_{\in K_{(j)}^{k+2}}+\underbrace{d_{(0)}d_{(1)}\alpha+d_{(1)}d_{(0)}\alpha}_{\in K_{(j+1)}^{k+2}}+\underbrace{d_{(0)}d_{(2)}\alpha+d_{(1)}^2\alpha+d_{(2)}d_{(0)}\alpha}_{\in K_{(j+2)}^{k+2}}+\cdots+\underbrace{d_{(S)}^2\alpha}_{\in K_{(j+2S)}^{k+2}}=\sum_{l=1}^{2S}\sum_{i+j=l}d_{(i)}d_{(j)}\alpha\,.
   \end{align*}
   We then have that each addend $\sum_{i+j=l}d_{(i)}d_{(j)}\alpha$ belongs to a different subspace $K_{(j+l)}^{k+2}$ and so, by the direct sum decomposition $K_j^{k+2}=\bigoplus_{l=0}^SK_{(j+l)}^{k+2}$, we get that each addend will necessarily be zero.
\end{proof}
\begin{corollary}
    Any filtered complex $(K,d)$ of finite depth as in \eqref{grading, direct sum decomposition} admits a spectral sequence associated to the specific filtration considered, whose differentials at each step $r$ have the following expression
    \begin{align*}
        \partial_r\colon Z_r^{p,\bullet}/B_r^{p,\bullet}\to Z_r^{p+r,\bullet}/B_r^{p+r,\bullet}\ ,\ \partial_r\big([x]\big)=\bigg[d_{(r)}x-\sum_{i=1}^{r-1}d_{(i)}z_{p+r-i}\bigg]\ ,\ r=1,\ldots,S\,.
    \end{align*} 
\end{corollary}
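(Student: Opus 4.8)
The plan is to obtain this corollary at once by feeding the multicomplex structure from the preceding proposition into the abstract multicomplex Proposition that produces the spectral sequence and the formula for $\partial_r$; the only real task is to make the identification of $(K,d)$, with the filtration \eqref{filteredK}, with the total complex of that multicomplex completely explicit. First I would fix the bigrading. Using the direct sum decomposition $K^k=K^k_{(0)}\oplus\cdots\oplus K^k_{(S-1)}\oplus K^k_S$ introduced in the proof of the preceding proposition, set $C_{a,b}:=K^{a+b}_{(a)}$ for $a,b\in\mathbb{Z}$. Since $d_{(i)}$ maps $K^k_{(j)}$ into $K^{k+1}_{(j+i)}$, i.e. $C_{j,k-j}$ into $C_{j+i,k+1-j-i}$, the operator $d_{(i)}$ has bidegree $(i,1-i)$; and the relations $\sum_{i+j=l}d_{(i)}d_{(j)}=0$ for all $l\ge 0$ are exactly what was checked in that proof by expanding $d^2=0$ and sorting the terms by graded piece. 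Hence $(C,(d_{(i)}))$ is a multicomplex in the sense of Definition~\ref{def: multicomplex}.

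Next I would verify that $\mathrm{Tot}\,C$ recovers $(K,d)$ as a filtered complex. Because the filtration has finite depth, the general definition of the total complex collapses to the finite-sum form \eqref{formula 6}, so no convergence issue arises: $(\mathrm{Tot}\,C)_k=\bigoplus_{a\ge 0}C_{a,k-a}=\bigoplus_{a\ge 0}K^k_{(a)}=K^k$, the total differential $(dc)_a=\sum_{i\ge 0}d_{(i)}(c)_{a-i}$ reassembles to $d=\sum_{i=0}^S d_{(i)}$, and $(\mathcal{F}_p\mathrm{Tot}\,C)_k=\bigoplus_{a\ge p}K^k_{(a)}=K^k_p$, using $K_p=\bigoplus_{a\ge p}K_{(a)}$. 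Thus the filtration $\mathcal{F}_\bullet\mathrm{Tot}\,C$ is precisely the given filtration on $K$, so the spectral sequence of $\mathcal{F}_\bullet\mathrm{Tot}\,C$ is the spectral sequence of the filtered complex $(K,d)$.

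Then I would apply the abstract multicomplex Proposition verbatim. It gives the pages $E^{p,\bullet}_r(\mathrm{Tot}\,C)\cong Z^{p,\bullet}_r/B^{p,\bullet}_r$ and, transported through the isomorphism $\psi$, the differential $\partial_r([x])=\big[d_{(r)}x-\sum_{i=1}^{r-1}d_{(i)}z_{p+r-i}\big]$ for $x\in Z^{p,\bullet}_r$ and $z_{p+j}\in C_{p+j,\bullet}$ as in \eqref{4,4} --- which is the claimed formula, with $d_i$ specialized to $d_{(i)}$. To pin down the range $r=1,\dots,S$, observe that $C_{p,\bullet}=\bigoplus_k K^k_{(p)}=0$ unless $0\le p\le S$, so $E^{p,\bullet}_r=0$ for $p\notin\{0,\dots,S\}$; hence for every $r\ge S+1$ the map $\partial_r\colon E^{p,\bullet}_r\to E^{p+r,\bullet}_r$ has zero target ($p+r>S$), while the map entering $E^{p,\bullet}_r$ issues from $E^{p-r,\bullet}_r$ with $p-r<0$, so $E_{S+1}=E_\infty$ and the nontrivial differentials are exactly $\partial_1,\dots,\partial_S$, consistently with $d_{(r)}=0$ for $r>S$.

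There is no genuine mathematical obstacle here, since both inputs are already established; the step demanding care is purely the bookkeeping of conventions, namely fixing the bigrading $C_{a,b}=K^{a+b}_{(a)}$ and checking that the differential and, crucially, the filtration transport correctly under it so that the machinery of \cite{livernet2020spectral} applies on the nose. One conceptual remark worth recording is that the operators $d_{(i)}$ --- and hence the closed-form expression for $\partial_r$ --- depend on the auxiliary choice of complements in $K_j=K_{(j)}\oplus K_{j+1}$, whereas the spectral sequence of the filtration is canonical; a different splitting yields the same spectral sequence with an (a priori non-canonically) isomorphic description of its differentials.
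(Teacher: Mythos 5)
Your proposal is correct and follows essentially the same route as the paper, which treats this corollary as the immediate combination of the preceding proposition (establishing the multicomplex structure with $d_{(i)}$ of bidegree $(i,1-i)$) and the abstract proposition describing $\partial_r$ on $Z_r^{p,\bullet}/B_r^{p,\bullet}$ for a multicomplex. Your explicit bookkeeping of the identification $C_{a,b}=K^{a+b}_{(a)}$, the matching of $\mathcal{F}_\bullet\mathrm{Tot}\,C$ with the given filtration, and the observation that the nontrivial differentials stop at $r=S$ are exactly the (implicit) content of the paper's argument, spelled out.
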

We stress that, given a filtered complex $(K,d)$ of finite depth with base differential $d_0=d_{(0)}\colon K\to K$, once we fix a base transversal and cotransversal $\mathcal{T}\subset\mathcal{C}$, it is possible to define the partial inverse $d_0^{-1}\colon K\to K$ of $d_0$ as in Definition \ref{d_0 inv}. Therefore, using this additional map, it is possible to rephrase the condition $x\in Z_r^{p,\bullet}$ of \eqref{4,4} in terms of this operator $d_0^{-1}$, and require the $z_{p+j}\in K_{(p+j)}$ to be elements in $\mathcal{T}$.

In other words, given $x\in Z_r^{p,\bullet}$, we have that there exist $z_{p+j}\in K_{(p+j)}\cap\mathcal{T}$ with $1\le j\le r-1$ such that
\begin{align*}
    z_{p+1}=&d_0^{-1}d_{(1)}x\\ z_{p+2}=&d_0^{-1}(d_{(2)}x-d_{(1)}z_{p+1})=d_0^{-1}d_{(2)}x-d_0^{-1}d_{(1)}d_0^{-1}d_{(1)}x\\ z_{p+3}=&d_0^{(-1)}(d_{(3)}x-d_{(2)}z_{p+1}-d_{(1)}z_{p+2})\\=&d_0^{-1}d_{(3)}x-d_0^{-1}d_{(2)}d_0^{-1}d_{(1)}x-d_0^{-1}d_{(1)}d_0^{-1}d_{(2)}x+d_{0}^{-1}d_{(1)}d_0^{-1}d_{(1)}d_0^{-1}d_{(1)}x\,.
\end{align*}
If we introduce the multi-index notation
\begin{align*}
    (d_0^{-1}d)_{I_m^j}:=(d_0^{-1}d_{(i_1)})(d_0^{-1}d_{(i_2)})\cdots(d_0^{-1}d_{(i_m)})
\end{align*}
where $I_m^j=(i_1,\ldots,i_m)\in\mathbb N_+^m$ for which $\vert I_m^j\vert=i_1+i_2+\cdots+i_m=j$, then for any $j=1,\ldots,r-1$, we have the expression
\begin{align}\label{18}
    z_{p+j}=\sum_{m=1}^j(-1)^{m-1}\sum_{I_m^j}(d_0^{-1}d)_{I_m^j}x\,.
\end{align}

\begin{remark}
Applying the definition of the subspaces $\mathcal{E}_0$ and Proposition \ref{prop 2.9}, there is a clear correspondence between the subspaces $\mathcal{P}=\mathcal{P}_{d_0,\mathcal{T}\subset\mathcal{C}}$ and the first page quotients of the spectral sequence, as
\begin{align*}
    \mathcal{P}=\mathcal{P}_{d_0,\mathcal{T}\subset\mathcal{C}}\cong \oplus_{p=0}^S E^{p,\bullet}_1\,.
\end{align*}

    Moreover, if we use the explicit expression of the $z_{p+j}\in K_{(p+j)}\cap\mathcal{T}$ in terms of $x$ given in \eqref{18}, it is possible to obtain the following expression for the $r^{th}$-differential of the spectral sequence
    \begin{align*}
        \partial_r([x])=\bigg[d_{(r)}x-\sum_{i=1}^{r-1}d_{(i)}\bigg(\sum_{m=1}^{r-i}(-1)^{m-1}\sum_{I_m^{r-i}}(d_0^{-1}d)_{I_m^{r-i}}\bigg)x\bigg]
    \end{align*}
    It is then a straightforward computation to check that the differential operator $dP^\infty$ introduced in section \ref{sec:Filtered} applied to any element $x\in Z_1^{p,\bullet}$ coincides with the sum of the differentials $\partial_r$ that appear in the multicomplex spectral sequence
generated by the filtration \eqref{grading, direct sum decomposition}. This result is analogous to the main result contained in \cite{lerario2023multicomplexes}, where considerations were only limited to the de Rham complex on Carnot groups.
\end{remark}

\section{Three dimensional Lie groups with negative grading}\label{last section}
We will include the following examples of the theory of Section~\ref{sec:SpectralSeq} which also cover non-nilpotent Lie groups.
Let us consider the 3-dimensional simply connected Lie group $G_\kappa$, whose Lie algebra $\mathfrak{g}_\kappa=\spn_{\mathbb R}\lbrace X,Y,Z\rbrace$,which can be identified with the $\mathbb{R}$-module of left-invariant vector fields, has the following nontrivial bracket relations
\begin{align*}
    [X,Y]=Z\ ,\ [Y,Z]=\kappa X\ ,\ [X,Z]=-\kappa Y\,.
\end{align*}
Let us stress that $G_1$ is isomorphic to $\SU(2)$, $G_0$ coincides with the first Heisenberg group, while $G_{-1}$ is the universal cover group of $\SL(2)$.

In this example, we will be showing the construction of the spectral sequence associated with the following ``sub-Riemannian'' grading:
\begin{align*}
    \mathfrak{g}_\kappa=\mathfrak{g}_{-2}\oplus\mathfrak{g}_{-1}\ , \text{ where } \mathfrak{g}_{-1}=\langle X,Y\rangle\text{ and }\mathfrak{g}_{-2}=\langle Z\rangle.
\end{align*}
Let us denote by $\lbrace X^\ast,Y^\ast,Z^\ast\rbrace$ the dual basis of $\lbrace X,Y,Z\rbrace$ on $\mathfrak{g}_\kappa^\ast$.

Using this grading and following the same construction described in subsection \ref{sec:GradedLie}, we obtain a decreasing and finite filtration over the space $\Omega^\bullet(G_\kappa)$ of smooth forms:
\begin{align*}
    \mathcal{F}_0\Omega^\bullet(G_\kappa)=\Omega^\bullet(G_\kappa)\supset\mathcal{F}_1\Omega^\bullet(G_\kappa)\supset\mathcal{F}_2\Omega^\bullet(G_\kappa)\supset\mathcal{F}_3\Omega^\bullet(G_\kappa)\supset\mathcal{F}_4\Omega^\bullet(G_\kappa)\supset\mathcal{F}_5\Omega^\bullet(G_\kappa)=\lbrace 0\rbrace
\end{align*}
where
\begin{itemize}
    \item $\mathcal{F}_0\Omega^\bullet(G_\kappa)=\Omega^\bullet(G_\kappa)$;
    \item $\mathcal{F}_1\Omega^\bullet(G_\kappa)= \spn_{C^\infty(G_\kappa)}\lbrace X^\ast,Y^\ast,Z^\ast,X^\ast\wedge Y^\ast, X^\ast\wedge Z^\ast, Y^\ast\wedge Z^\ast, X^\ast\wedge Y^\ast\wedge Z^\ast\rbrace$;
    \item $\mathcal{F}_2\Omega^\bullet(G_\kappa)= \spn_{C^\infty(G_\kappa)}\lbrace Z^\ast,X^\ast\wedge Y^\ast, X^\ast\wedge Z^\ast, Y^\ast\wedge Z^\ast, X^\ast\wedge Y^\ast\wedge Z^\ast\rbrace$;
    \item $\mathcal{F}_3\Omega^\bullet(G_\kappa)= \spn_{C^\infty(G_\kappa)}\lbrace  X^\ast\wedge Z^\ast, Y^\ast\wedge Z^\ast, X^\ast\wedge Y^\ast\wedge Z^\ast\rbrace$;
    \item $\mathcal{F}_4\Omega^\bullet(G_\kappa)= \spn_{C^\infty(G_\kappa)}\lbrace  X^\ast\wedge Y^\ast\wedge Z^\ast\rbrace$;
    \item $\mathcal{F}_5\Omega^\bullet(G_\kappa)=\lbrace 0\rbrace$.
\end{itemize}

Following the notation adopted in Section~\ref{sec:GradedLie}, we are considering the de Rham complex $(\Omega^\bullet(G_\kappa),d)$ as our filtered complex $(K,\tilde d)$, where $\tilde d$ is the exterior derivative and it admits the splitting $d=d_0+d_{(1)}+d_{(2)}$. This can be seen explicitly by writing down the action of $d$ on an arbitrary 1-form $\alpha=fX^\ast+gY^\ast+hZ^\ast\in\Omega^1(G_\kappa)=K^1$ with $f,g,h\in C^\infty(G_\kappa)=K^0$:
\begin{align*}
    d \big(fX^\ast+gY^\ast+hZ^\ast\big)=&df\wedge X^\ast+f\,dX^\ast+dg\wedge Y^\ast+g\,dY^\ast+dh\wedge Z^\ast+h\,dZ^\ast\\=&-YfX^\ast\wedge Y^\ast-ZfX^\ast\wedge Z^\ast-\kappa fY^\ast\wedge Z^\ast+XgX^\ast\wedge Y^\ast-ZgY^\ast\wedge Z^\ast+\\&+\kappa gX^\ast\wedge Z^\ast-hX^\ast\wedge Y^\ast+XhX^\ast\wedge Z^\ast+YhY^\ast\wedge Z^\ast\\=&\underbrace{-hX^\ast\wedge Y^\ast}_{d_0}+\underbrace{(Xg-Yf)X^\ast\wedge Y^\ast+XhX^\ast\wedge Z^\ast+YhY^\ast\wedge Z^\ast}_{d_{(1)}}+\\&+\underbrace{(\kappa g-Zf)X^\ast\wedge Z^\ast-(\kappa f+Zg)Y^\ast\wedge Z^\ast}_{d_{(2)}}\,.
\end{align*}
Finally, the direct sum decomposition associated to such finite filtration is given by
\begin{align*}
    \Omega^\bullet(G_\kappa)=K=K_{(0)}\oplus K_{(1)}\oplus K_{(2)}\oplus K_{(3)}\oplus K_{4}\,,
\end{align*}
where
\begin{itemize}
    \item $K_{(0)}=K^0=C^\infty(G_\kappa)$;
    \item $K_{(1)}=\spn_{C^\infty(G_\kappa)}\lbrace X^\ast,Y^\ast\rbrace$;
    \item  $K_{(2)}=\spn_{C^\infty(G_\kappa)}\lbrace Z^\ast,X^\ast\wedge Y^\ast\rbrace$;
    \item $K_{(3)}= \spn_{C^\infty(G_\kappa)}\lbrace X^\ast\wedge Z^\ast,Y^\ast\wedge Z^\ast\rbrace$;
    \item $K_{(4)}= \spn_{C^\infty(G_\kappa)}\lbrace X^\ast\wedge Y^\ast\wedge Z^\ast\rbrace$.
\end{itemize}

We will now proceed to show the computations of the spectral sequence associated to the multicomplex that arises from this particular filtration, just as described in the previous section.
Notice that in our computations, for all the pages $r>0$, we will compute the subspaces $Z_r^{p,\bullet}$ and $B_r^{p,\bullet}$, and use the isomorphism
\begin{align*}
    E_r^{p,\bullet}=E_r^{p,\bullet}(\mathrm{Tot}\,\Omega^\bullet(G_\kappa))=\frac{\mathcal{Z}_r^{p,\bullet}(\mathrm{Tot}\,\Omega^\bullet(G_\kappa))}{\mathcal{B}_r^{p,\bullet}(\mathrm{Tot}\,\Omega^\bullet(G_\kappa))}=\frac{Z_r^{p,\bullet}}{B_r^{p,\bullet}}\,.
\end{align*}

Since the filtration over the space of smooth forms is nontrivial up to $S=4$, we are going to present and discuss the quotients that appear in the spectral sequence construction for each $p=0,\ldots,4$. In order to make the computations more understandable, we are also going to summarise the overall information at each page in a more pictorial way with the use of diagrams. We should stress that we chose not to follow the standard convention of such diagrams, where at each page $r$ the quotient $E^{p,s}_r$ is placed at position $p$ along the horizontal axis and position $s$ along the vertical one. In this way, each quotient $E_r^{p,s}$ would be placed at the point $(p,s)$ on the Cartesian plane. Instead, in our diagrams the level of the filtration is encoded along the vertical axis, and the degree of the forms along the horizontal axis, so that each quotient $E^{p,s}_r$ will be placed at the point $(p+s,-p)$ on the Cartesian plane. Finally, in order to simplify the notation, we will denote by $\langle\cdot\rangle$ the span over the space of smooth functions $C^\infty(G_\kappa)$.

\setcounter{subsection}{-1}

\subsection{Page-zero quotients $E_0^{p,\bullet}$}

At the zeroth page of the spectral sequence for $r=0$, one has the quotients:
\begin{align*}
    E_0^{p,\bullet}=\frac{\mathcal{Z}_0^{p,\bullet}(\mathrm{Tot}\,\Omega^\bullet(G_\kappa))}{\mathcal{B}_0^{p,\bullet}(\mathrm{Tot}\,\Omega^\bullet(G_\kappa))}=\frac{\mathcal{F}_p\Omega^\bullet(G_\kappa)}{\mathcal{F}_{p+1}\Omega^\bullet(G_\kappa)}\,.
\end{align*}
$\mathbf{p=0}$
\begin{itemize}
    \item $E^{0,0}_0=\mathcal{F}_0\Omega^0(G_\kappa)/\mathcal{F}_1\Omega^0(G_\kappa)=K^0=\Omega^0(G_\kappa)=\langle 1\rangle$;
    \item $E_0^{0,1}=\mathcal{F}_0\Omega^1(G_\kappa)/\mathcal{F}_1\Omega^1(G_\kappa)=[0]_0$;
    \item $E_0^{0,2}=\mathcal{F}_0\Omega^2(G_\kappa)/\mathcal{F}_1\Omega^2(G_\kappa)=[0]_0$;
    \item $E_0^{0,3}=\mathcal{F}_0\Omega^3(G_\kappa)/\mathcal{F}_1\Omega^3(G_\kappa)=[0]_0$;
\end{itemize}
$\mathbf{p=1}$
\begin{itemize}
    \item $E_0^{1,-1}=\mathcal{F}_1\Omega^0(G_\kappa)/\mathcal{F}_2\Omega^0(G_\kappa)=0$;
    \item $E_0^{1,0}=\mathcal{F}_1\Omega^1(G_\kappa)/\mathcal{F}_2\Omega^1(G_\kappa)=K^1/K^1_{(2)}=\Omega^1(G_\kappa)/\langle Z^\ast\rangle=\langle [X^\ast]_0,[Y^\ast]_0\rangle$;
    \item $E_0^{1,1}=\mathcal{F}_1\Omega^2(G_\kappa)/\mathcal{F}_2\Omega^2(G_\kappa)= [0]_0$;
    \item $E_0^{1,2}=\mathcal{F}_1\Omega^3(G_\kappa)/\mathcal{F}_2\Omega^3(G_\kappa)=[0]_0$;
\end{itemize}
$\mathbf{p=2}$
\begin{itemize}
    \item $E^{2,-2}_0=\mathcal{F}_2\Omega^0(G_\kappa)/\mathcal{F}_3\Omega^0(G_\kappa)=0$;
    \item $E_0^{2,-1}=\mathcal{F}_2\Omega^1(G_\kappa)/\mathcal{F}_3\Omega^1(G_\kappa)=\langle Z^\ast\rangle$;
    \item $E_0^{2,0}=\mathcal{F}_2\Omega^2(G_\kappa)/\mathcal{F}_3\Omega^2(G_\kappa)=K^2/K^2_{(3)}=\langle \Omega^2(G_\kappa)/\langle X^\ast\wedge Z^\ast, Y^\ast\wedge Z^\ast\rangle=\langle [X^\ast\wedge Y^\ast]_0\rangle$;
    \item $E_0^{2,1}=\mathcal{F}_2\Omega^3(G_\kappa)/\mathcal{F}_3\Omega^3(G_\kappa)=[0]_0$;
\end{itemize}
$\mathbf{p=3}$
\begin{itemize}
    \item $E_0^{3,-3}=\mathcal{F}_3\Omega^0(G_\kappa)/\mathcal{F}_4\Omega^0(G_\kappa)=0$;
    \item $E_0^{3,-2}=\mathcal{F}_3\Omega^1(G_\kappa)/\mathcal{F}_4\Omega^1(G_\kappa)=0$;
    \item $E_0^{3,-1}=\mathcal{F}_3\Omega^2(G_\kappa)/\mathcal{F}_4\Omega^2(G_\kappa)=K^2_{(3)}=\langle X^\ast\wedge Z^\ast, Y^\ast\wedge Z^\ast\rangle$;
    \item $E_0^{3,0}=\mathcal{F}_3\Omega^3(G_\kappa)/\mathcal{F}_4\Omega^3(G_\kappa)=[0]_0$;
\end{itemize}
$\mathbf{p=4}$
\begin{itemize}
    \item $E_0^{4,-4}=\mathcal{F}_{4}\Omega^0(G_\kappa)=0$;
    \item $E_0^{4,-3}=\mathcal{F}_4\Omega^1(G_\kappa)=0$;
    \item $E_0^{4,-2}=\mathcal{F}_4\Omega^2(G_\kappa)=0$;
    \item $E_0^{4,-1}=\mathcal{F}_4\Omega^3(G_\kappa)=K^4=\Omega^3(G_\kappa)=\langle X^\ast\wedge Y^\ast\wedge Z^\ast\rangle$.
\end{itemize}
These computations can be easily summarised in a more visual way using the following diagram
\begin{align*}
    \begin{picture}(360,250)(0,0)
   \put(  0,  0){\line( 1, 0){360}}
   \put(  0,  0){\line( 0, 1){250}}
   \multiput(  0, 50)( 0,50){5}{\line( 1, 0){360}}
   \multiput( 90,  0)(90, 0){4}{\line( 0, 1){250}}
   \put(45,225){\makebox(0,0){$ 1$}}
   \put(45,175){\makebox(0,0){$ 0$}}
   \put(45,125){\makebox(0,0){$0$}}
   \put(135,225){\makebox(0,0){$0$}}
   \put(135,175){\makebox(0,0){$ [X^\ast]_0,[Y^\ast]_0$}}
   \put(135,125){\makebox(0,0){{ $ Z^\ast$}}}
   \put(135,75){\makebox(0,0){{ $ 0$}}}
   \put(225,175){\makebox(0,0){$0$}}
   \put(225,125){\makebox(0,0){$[X^\ast\wedge Y^\ast]_0$}}
   \put(225,75){\makebox(0,0){{ $X^\ast\wedge Z^\ast,Y^\ast\wedge Z^\ast$}}}
   \put(225,25){\makebox(0,0){{ $ 0$}}}
   \put(315,125){\makebox(0,0){{ $0$}}}
   \put(315,75){\makebox(0,0){{ $0$}}}
   \put(315,25){\makebox(0,0){{ $X^\ast\wedge Y^\ast\wedge Z^\ast$}}}
   \put(60,225){\vector(1,0){55}}
   \put(60,175){\vector(1,0){40}}
   \put(60,125){\vector(1,0){55}}
   \put(168,175){\vector(1,0){40}}
   \put(150,125){\vector(1,0){45}}
   \put(150,75){\vector(1,0){35}}
   \put(255,125){\vector(1,0){50}}
   \put(268,75){\vector(1,0){35}}
   \put(240,25){\vector(1,0){40}}
  \end{picture} 
\end{align*}
where we are also drawing the action of the operators $\partial_0=d_0\colon E_0^{p,\bullet}\to E^{p,\bullet}_0$.
Moreover, let us stress that such operator $\partial_0$ has nontrivial action only on $E_0^{2,-1}$, as
\begin{align*}
    \partial_0\colon E^{2,-1}_0\to E^{2,0}_0\ ,\ \partial_0(Z^\ast)=d_0(Z^\ast)=-X^\ast\wedge Y^\ast =-[X^\ast\wedge Y^\ast]_0\,.
\end{align*}
\subsection{Page-one quotients $E_1^{p,\bullet}$} 
Starting from page $r=1$, we are going to use the isomorphism
\begin{align*}
    E_1^{p,\bullet}=\frac{\mathcal{Z}_1^{p,\bullet}(\mathrm{Tot}\,\Omega^\bullet(G_\kappa))}{\mathcal{B}_1^{p,\bullet}(\mathrm{Tot}\,\Omega^\bullet(G_\kappa))}=\frac{Z_1^{p,\bullet}}{B_1^{p,\bullet}}\,,
\end{align*}
where
\begin{align*}
    Z_1^{p,\bullet}=&\lbrace\alpha\in K_{(p)}: d_0\alpha=0\rbrace\ \text{ and }
    B_1^{p,\bullet}=\lbrace\beta\in K_{(p)}: \exists\ \gamma\in K_{(p)}\text{ such that }d_0\gamma=\beta\rbrace\,.
\end{align*}
$\mathbf{p\neq 2}$\\ As pointed out before, we have that $d_0$ acts as the zero map on any $K_{(p)}$ with $p\neq 2$, and so $E_1^{p,\bullet}=E^{p,\bullet}_0$.\\
$\mathbf{p=2}$\\ The map $d_0\colon K^1_{(2)}\to K^2_{(2)}$
is an isomorphism, and so
\begin{align*}
    Z_1^{2,-1}&=\lbrace \alpha\in K^1_{(2)}: d_0\alpha=0\rbrace= 0 \ \text{ and }\ B_1^{2,-1}= 0\\
    Z_1^{2,0}=&\lbrace \alpha\in K^2_{(2)}: d_0\alpha=0\rbrace=K^2_{(2)}\ \text{ and }\ B_1^{2,0}=K^2_{(2)}\,.
\end{align*}
Therefore
\begin{itemize}
    \item $E^{2,-2}_1=E^{2,-2}_0=0$;
    \item $E_1^{2,-1}=Z_1^{2,-1}/B_1^{2,-1}=0$;
    \item $E_1^{2,0}=Z_1^{2,0}/B_1^{2,0}= [0]_1$;
    \item $E_1^{2,1}=E_0^{2,1}=[0]_0$.
\end{itemize}
By the properties of the spectral sequence, for any $r> 1$, $E_r^{2,\bullet}\subset E_1^{2,\bullet}$, and so $E^{2,\bullet}_r\cong 0$ for any $r\ge 1$.
\begin{align*}
    \begin{picture}(360,250)(0,0)
   \put(  0,  0){\line( 1, 0){360}}
   \put(  0,  0){\line( 0, 1){250}}
   \multiput(  0, 50)( 0,50){5}{\line( 1, 0){360}}
   \multiput( 90,  0)(90, 0){4}{\line( 0, 1){250}}
   \put(45,225){\makebox(0,0){$ 1$}}
   \put(45,175){\makebox(0,0){$ 0$}}
   \put(135,225){\makebox(0,0){$0$}}
   \put(135,175){\makebox(0,0){$ [X^\ast]_0,[Y^\ast]_0$}}
   \put(135,125){\makebox(0,0){{ $ 0$}}}
   \put(135,75){\makebox(0,0){{ $ 0$}}}
   \put(225,175){\makebox(0,0){$0$}}
   \put(225,125){\makebox(0,0){$[0]_1$}}
   \put(225,75){\makebox(0,0){{ $X^\ast\wedge Z^\ast,Y^\ast\wedge Z^\ast$}}}
   \put(225,25){\makebox(0,0){{ $ 0$}}}
   \put(315,75){\makebox(0,0){{ $0$}}}
   \put(315,25){\makebox(0,0){{ $X^\ast\wedge Y^\ast\wedge Z^\ast$}}}
   \put(58,220){\vector(1,-0.8){45}}
   \put(166,170){\vector(1,-0.8){45}}
   \put(147,120){\vector(1,-0.8){45}}
   \put(263,70){\vector(1,-1){35}}
  \end{picture} 
\end{align*}
In this diagram of page one, we are also drawing the action of the operators $\partial_1\colon E_1^{p,\bullet}\to E_1^{p+1,\bullet}$, which have nontrivial action only on $E_0^{0,0}$ and $E^{3,-1}_1$. Indeed
\begin{align*}
    \partial_1&\colon E^{0,0}_1\to E_1^{1,0}\ ,\ \partial_1[f]=[d_{(1)}f]\ , \ \forall\ f\in C^\infty(G_\kappa)\,,\\
    \partial_1\colon& E_1^{3,-1}\to E_1^{4,-1}\ ,\ \partial_1[\alpha]=[d_{(1)}\alpha]\ ,\ \forall\ \alpha\in Z^{3,-1}_1\,.
\end{align*}
In particular, for any $f\in C^\infty(G_\kappa)$, we have that
\begin{align*}
    \partial_1[f]=[d_{(1)}f]=[XfX^\ast+YfY^\ast]= XfX^\ast+YfY^\ast\ \mathrm{mod}\,\langle Z^\ast\rangle\,,
\end{align*}
while for any $\alpha=fX^\ast\wedge Z^\ast+gY^\ast\wedge Z^\ast\in Z_1^{3,-1}=Z_0^{3,-1}=K_{(3)}^2$, we have that
\begin{align*}
    \partial_1\big[fX^\ast \wedge Z^\ast+gY^\ast\wedge Z^\ast\big]=\big[d_{(1)}\big(fX^\ast \wedge Z^\ast+gY^\ast\wedge Z^\ast\big)\big]=(Xg-Yf) X^\ast\wedge Y^\ast\wedge Z^\ast\,.
\end{align*}
\subsection{Page-two quotients $E_2^{p,\bullet}$} By definition, the subspaces $Z_2^{p,\bullet},B_2^{p,\bullet}\subset K_{(p)}$ are given by
\begin{align*}
    Z_2^{p,\bullet}=&\lbrace \alpha\in  K_{(p)}: d_0\alpha=0\text{ and }\exists\ \beta_1\in K_{(p+1)}\text{ s.t. }d_1\alpha=d_0\beta_1\rbrace\\
    B_2^{p,\bullet}=&\lbrace\beta\in K_{(p)}: \exists\ \gamma_0\in K_{(p)}, \gamma_1\in K_{(p-1)}\text{ s.t. }\beta=d_0\gamma_0+d_{(1)}\gamma_1\text{ and }d_0\gamma_1=0\rbrace 
\end{align*}
$\mathbf{p=0}$\\
The only nontrivial space of forms in $K_{(0)}$ is the space of 0-forms, and so
\begin{align*}
    Z_2^{0,0}=\lbrace f\in C^\infty(G_\kappa)\mid d_0f=0\text{ and }\exists\, g\in K^0_{(1)}\text{ s.t. }d_{(1)}f=d_0 g\rbrace\,.
\end{align*}
Since $K^0_{(1)}=0$, the condition $f\in Z_2^{0,0}$ reads as $d_{(1)}f=0$, or in other words
\begin{align*}
    f\in Z_2^{0,0}\longleftrightarrow d_{(1)}f=Xf X^\ast+Yf Y^\ast=0\longleftrightarrow Xf\equiv Yf\equiv 0\,.
\end{align*}
Moreover, since $Z=[X,Y]$, we have that $Z_2^{0,0}=\mathbb{R}$ coincides with the constant functions, and so $E_2^{0,0}=Z_2^{0,0}=\mathbb{R}$. Finally, we know that the de Rham cohomology class in degree 0, $H^0(\Omega^\bullet(G_\kappa),d)$ is isomorphic to $\mathbb R$, and so by the properties of spectral sequences, we readily get that $E_r^{0,0}\cong H^0(G_\kappa,d)\cong\mathbb R$ for any $r\ge 2$.\\
$\mathbf{p=1}$\\ 
The only nontrivial space of forms in $K_{(1)}$ is the space of 1-forms, and so
\begin{align*}
    Z_2^{1,0}=\lbrace\alpha\in K_{(1)}^1: d_0\alpha=0\text{ and }\exists\,\beta_1\in K_{(2)}^1\text{ s.t. }d_{(1)}\alpha=d_0\beta_1\rbrace\,.
\end{align*}An arbitrary element $\alpha\in K_{(1)}^1$ with $\alpha=fX^\ast+gY^\ast$ satisfies $d_0\alpha=0$, moreover
\begin{align*}
    d_{(1)}\alpha=(Xg-Yf)X^\ast\wedge Y^\ast=d_0[(Yf-Xg)Z^\ast]\,,\ \text{ where }(Yf-Xg)Z^\ast\in K^1_{(2)}
\end{align*}
and so $Z_2^{1,0}=K^1_{(1)}$.
On the other hand, 
\begin{align*}
    B_2^{1,0}=\lbrace \beta\in K_{(1)}^1: \exists \,\gamma_0\in K_{(1)}^0,\,\gamma_1\in K_{(0)}^0\text{ s.t. }\beta=d_0\gamma_0+d_{(1)}\gamma_1\text{ and }d_0\gamma_1=0\rbrace
\end{align*}
and since $K_{(1)}^0=0$, we have that an element $\beta=fX^\ast+gY^\ast\in B_2^{1,0}$ if there exists $h\in K_{(0)}^0$ such that 
\begin{align*}
    \beta=d_{(1)}h\text{ and }d_0h=0 \longleftrightarrow \beta= d_{(1)}h=Xh X^\ast+Yh Y^\ast\,.
\end{align*}
Therefore, we have that $E_2^{1,0}=Z_2^{1,0}/B^{1,0}_2=K_{(1)}^1/d_{(1)}K_{(0)}^0=\langle [X^\ast,Y^\ast]_2\rangle$.\\
$\mathbf{p=2}$\\
As already pointed out in the computations of the previous page, we have that $E_2^{2,\bullet}\cong 0$.\\
$\mathbf{p=3}$\\
The only nontrivial space of forms in $K_{(3)}$ is the space of 2-forms, and so
\begin{align*}
    Z_2^{3,-1}=\lbrace \alpha\in K_{(3)}^2: d_0\alpha=0\text{ and }\exists\,\beta_1\in K_{(4)}^2\text{ s.t. } d_{(1)}\alpha=d_0\beta_1\rbrace\,.
\end{align*}
An arbitrary element $\alpha\in K_{(3)}^2$ with $\alpha=fX^\ast\wedge Z^\ast+gY^\ast\wedge Z^\ast$ satisfies $d_0\alpha=0$, moreover, since $K_{(4)}^2=0$, we have that $\alpha\in Z_2^{3,-1}$ if $d_{(1)}\alpha=0$. Since also $d_{(2)}\alpha=0$ for any $\alpha\in K_{(3)}^2$, we have that $d\alpha=d_0\alpha+d_{(1)}\alpha+d_{(2)}\alpha=d_{(1)}\alpha$, and so the space $Z_2^{3,-1}$ coincides with the space of closed forms in $K_{(3)}^2$.
On the other hand, 
\begin{align*}
    \beta\in B_2^{3,-1}\longleftrightarrow \exists\ \gamma_0\in K_{(3)}^1,\gamma_1\in K_{(2)}^1\text{ s.t. }\beta=d_0\gamma_0+d_{(1)}\gamma_1\text{ and }d_0\gamma_1=0\,.
\end{align*}
We know that $K_{(3)}^1=0$, however, for any form $\gamma_1=f Z^\ast\in K_{(2)}^1$, we have $d_0\gamma_1=-fX^\ast\wedge Y^\ast\neq 0$, and so $B_2^{3,-1}=0$. Therefore, $E_2^{3,-1}=Z_2^{3,-1}/B_2^{3,-1}=Z_2^{3,-1}=\lbrace \alpha\in K_{(3)}^2:d\alpha=0\rbrace=\langle [X^\ast\wedge Z^\ast,Y^\ast\wedge Z^\ast]_2\rangle$.\\
$\mathbf{p=4}$\\
Finally, the only nontrivial space of forms in $K_{(4)}$ is the space of 3-forms, and so
\begin{align*}
    Z^{4,-1}_2=\lbrace\alpha\in K^3_{(4)}: d_0\alpha=0\text{ and }\exists\,\beta_1\in K_{(5)}^3\text{ s.t. }d_{(1)}\alpha=d_0\beta_1\rbrace\,,
\end{align*} and since $K_{(5)}^3=K_{(5)}^4=0$, we have that $ Z_2^{4,-1}=K_{(4)}^3$. On the other hand, the condition $\beta\in B_2^{4,-1}$ reads
\begin{align*}
    \beta\in B_2^{4,-1}\longleftrightarrow\exists\, \gamma_0\in K_{(4)}^2,\gamma_1\in K_{(3)}^2\text{ s.t. }\beta=d_0\gamma_0+d_{(1)}\gamma_1\text{ and }d_0\gamma_1=0\,.
\end{align*}
Since $K_{(4)}^2=0$, and $d_0\gamma_1=0$ for any $\gamma_1\in K_{(3)}^2$, we get $B^{4,-1}_2=\lbrace\beta\in K_{(4)}^3: \exists\, \gamma_1\in K_{(3)}^2\text{ s.t. }d_{(1)}\gamma_1=\beta\rbrace=\lbrace (Xg-Yf)X^\ast\wedge Y^\ast\wedge Z^\ast\rbrace$. \\
Therefore, 
$E_2^{4,-1}=Z_2^{4,-1}/B_2^{4,-1}=K_{(4)}^3/ d_{(1)}K_{(3)}^2=[X^\ast\wedge Y^\ast\wedge Z^\ast]_2$.\\
It should be noted that any $\alpha\in Z_2^{4,-1}=K_{(4)}^3$ is a closed form, since $d\alpha=0$. If $\alpha$ is also exact, then there exists $\beta=\beta_{(2)}+\beta_{(3)}\in K^2$  such that $d\beta=\alpha$. Here, $\beta_{(2)}\in K_{(2)}^2$ and $\beta_{(3)}\in K_{(3)}^2$, where $\beta_{(2)}=d_0\gamma_{(2)}$ for some $\gamma_{(2)}\in K_{(2)}^1$ since the map $d_0\colon K_{(2)}^1\to K_{(2)}^2$ is an isomorphism. Therefore
\begin{align*}
    \alpha=d\beta=d_{(2)}\beta_{(2)}+d_{(1)}\beta_{(3)}=d_{(2)}d_0\gamma_{(2)}+d_{(1)}\beta_{(3)}=-d_{(1)}^2\gamma_{(2)}+d_{(1)}\beta_{(3)}\,.
\end{align*}
Here, in the final equality, we use the fact that $(K,d)$ is a multicomplex and $d_{(0)}d_{(2)}+d_{(1)}^2+d_{(2)}d_{(0)}=0$, and that $d_{(2)}\colon K_{(2)}^1\to K_{(4)}^2$ is the zero map. Hence, for any exact form $\alpha\in Z_2^{4,-1}$ there exists $\tilde\beta=\beta_{(3)}-d_{(1)}\gamma_{(2)}\in K_{(3)}^2$ such that $\alpha=d_{(1)}\tilde\beta$. In other words, $E_{2}^{4,-1}$ is isomorphic to $H^3(\Omega^\bullet(G_\kappa),d)$, the de Rham cohomology class of $G_\kappa$ in degree 3. In the case where $H^3(\Omega^\bullet(G_\kappa),d)=0$, for example in the first Heisenberg group when $\kappa=0$, we have $E_2^{4,-1}\cong 0$, whereas if $H^3(\Omega^\bullet(G_\kappa),d)\cong \mathbb R$, for example when $\kappa >0$, then $E_2^{4,-1}\cong\mathbb R$. Following the same reasoning as in the case of $\mathbf{p=0}$, we also have that $E^{4,-1}_r\cong H^3(\Omega^\bullet(G_\kappa),d)$ for any $r\ge 2$.

\begin{align*}
    \begin{picture}(360,250)(0,0)
   \put(  0,  0){\line( 1, 0){360}}
   \put(  0,  0){\line( 0, 1){250}}
   \multiput(  0, 50)( 0,50){5}{\line( 1, 0){360}}
   \multiput( 90,  0)(90, 0){4}{\line( 0, 1){250}}
   \put(45,225){\makebox(0,0){$ \mathbb{R}$}}
   \put(45,175){\makebox(0,0){$ 0$}}
   \put(135,225){\makebox(0,0){$0$}}
   \put(135,175){\makebox(0,0){$ [X^\ast,Y^\ast]_2$}}
   \put(135,125){\makebox(0,0){{ $ 0$}}}
   \put(135,75){\makebox(0,0){{ $ 0$}}}
   \put(225,175){\makebox(0,0){$0$}}
   \put(225,125){\makebox(0,0){$[0]_1$}}
   \put(223,75){\makebox(0,0){{ $[X^\ast\wedge Z^\ast,Y^\ast\wedge Z^\ast]_2$}}}
   \put(225,25){\makebox(0,0){{ $ 0$}}}
   \put(315,75){\makebox(0,0){{ $0$}}}
   \put(315,25){\makebox(0,0){{ $[X^\ast\wedge Y^\ast\wedge Z^\ast]_2$}}}
   \put(58,220){\vector(1,-1.3){65}}
   \put(160,168){\vector(1,-1.6){50}}
  \end{picture} 
\end{align*}
In this diagram of page two, we are showing the action of the operators $\partial_2\colon E_2^{p,\bullet}\to E_2^{p+2,\bullet}$. The only nontrivial map of such operators is $\partial_2\colon E_2^{1,0}\to E_2^{3,-1}$, so for any $\alpha\in Z_2^{1,0}=K_{(1)}^1$ 
\begin{align*}
    \partial_2\big[\alpha\big]=\big[d_{(2)}\alpha-d_{(1)}\beta_1\big]\ \text{ for some }\beta_1\in K_{(2)}^1\text{ s.t. }d_{(1)}\alpha=d_0\beta_1\,.
\end{align*}
As already pointed out before, if we fix $\alpha=fX^\ast+g Y^\ast$, it is sufficient to take $\beta_1=(Yf-Xg)Z^\ast\in K_{(2)}^1$ in order to obtain $d_{(1)}\alpha=d_0\beta_1$, hence
\begin{align*}
    \partial_2\big[fX^\ast+gY^\ast\big]=&\big[d_{(2)}(fX^\ast+gY^\ast)-d_{(1)}(Yf-Xg)Z^\ast\big]\\=&\big[(\kappa g-Zf)X^\ast\wedge Z^\ast-(\kappa f+Zg)Y^\ast\wedge Z^\ast+\\&-X(Yf-Xg)X^\ast\wedge Z^\ast-Y(Yf-Xg)Y^\ast\wedge Z^\ast\big]\\=&(\kappa g-Zf-XYf+X^2g)X^\ast\wedge Z^\ast+(YXg-Zg-Y^2f-\kappa f)Y^\ast\wedge Z^\ast
\end{align*}
One can also readily check that $\partial_2[\alpha]$ is an element of $E_2^{3,-1}$, since
\begin{align*}
    X(YXg-Zg-Y^2f-\kappa f)=Y(\kappa g-Zf-XYf+X^2g)\ ,\ \forall\, f,g\in C^\infty(G_\kappa)\,.
\end{align*}
Moreover, the differential $\partial_2$ is well-defined on the quotient $E_2^{1,0}$ since for any $\beta\in B_2^{1,0}$ we have $\beta=XhX^\ast+YhY^\ast$ for some $h\in C^\infty(G_\kappa)$, and so
\begin{align*}
    \partial_2[\beta]=&\big[d_{(2)}(XhX^\ast+YhY^\ast)-d_{(1)}(YXh-XYh)Z^\ast\big]\\
    =&\big[(\kappa Yh-ZXh)X^\ast\wedge Z^\ast-(\kappa Xh+ZYh)Y^\ast\wedge Z^\ast+XZhX^\ast\wedge Z^\ast+YZhY^\ast\wedge Z^\ast\big]=[0]\,.
\end{align*}
\subsection{Page-three quotients $E_3^{p,\bullet}$} By definition, we have that the subspaces $Z_3^{p,\bullet},B_3^{p,\bullet}\subset K_{(p)}$ are given by
\begin{align*}
    Z_3^{p,\bullet}=&\lbrace\alpha\in K_{(p)}\mid d_0\alpha=0\,,\,\exists\ \beta_1\in K_{(p+1)},\beta_2\in K_{(p+2)}\text{ s.t. }d_{(1)}\alpha=d_0\beta_1\,,\,d_{(2)}\alpha=d_0\beta_2+d_{(1)}\beta_1\rbrace\\
    B_3^{p,\bullet}=&\lbrace\beta\in K_{(p)}\mid \exists\ \gamma_i\in K_{(p-i)},\, i=0,1,2\text{ s.t. }\beta=\sum_{i=0}^2d_{(i)}\gamma_i\text{ and }d_0\gamma_1+d_{(1)}\gamma_2=d_0\gamma_2=0\rbrace\,.
\end{align*}
$\mathbf{p=0}$\\
As already pointed out in the computations of the previous page, we have that $E^{0,0}_3=E^{0,0}_2=\mathbb{R}$.\\
$\mathbf{p=1}$\\
We have already shown that an arbitrary element $\alpha=fX^\ast+gY^\ast\in K_{(1)}^1$ belongs to $Z^{1,0}_2$ since $d_0\alpha=0$ and one can take $\beta_1=(Yf-Xg)Z^\ast\in K_{(2)}^1$ such that $d_{(1)}\alpha=(Xg-Yf)X^\ast\wedge Y^\ast=d_0\beta_1$. Hence $\alpha\in Z^{1,0}_3$ if there exists $\beta_2\in K_{(3)}^1$ such that $d_{(2)}\alpha=d_0\beta_2+d_{(1)}\beta_1$, but since $K_{(3)}^1=0$, this means that $$\alpha\in Z^{1,0}_3\longleftrightarrow \alpha\in K_{(1)}^1\text{ and }d_{(2)}\alpha=d_{(1)}\beta_1\text{ with }d_{(1)}\alpha=d_{0}\beta_1\,.$$
On the other hand, $\beta\in B_3^{1,0}$ if there exist $\gamma_i\in K_{(1-i)}^0$, with $i=0,1,2$ such that $\beta=d_0\gamma_0+d_{(1)}\gamma_1+d_{(2)}\gamma_2$ and $d_0\gamma_1+d_{(1)}\gamma_2=d_0\gamma_2=0$. Since $K_{(-1)}^0=K_{(1)}^0=0$, we have that $\beta\in B_3^{1,0}$ if $\exists\, h\in C^\infty(G_\kappa)$ such that $\beta=d_{(1)}h$ and $d_0h=0$, or in other words $B_3^{1,0}=B_2^{1,0}$.\\
To show that $E^{1,0}_3=Z_3^{1,0}/B_3^{1,0}\cong 0$, let us first consider the exterior differential of an arbitrary form $\alpha\in Z_3^{1,0}$:
\begin{align*}
    d\alpha=d_{(0)}\alpha+d_{(1)}\alpha+d_{(2)}\alpha=0+d_0\beta_1+d_{(2)}\alpha=d\beta_1-d_{(1)}\beta_1-d_{(2)}\beta_1+d_{(2)}\alpha=d\beta_1
\end{align*}
since $d_{(2)}\colon K_{(2)}^1\to K_{(4)}^2$ is the zero map. This means that the form $d(\alpha-\beta_1)=0$, that is $\alpha-\beta_1$ is a closed form. We know that $H^1(\Omega^\bullet(G_\kappa),d)=0$ for any $\kappa$, and so $\alpha-\beta_1$ is also exact, that is $\exists\, h\in C^\infty(G_\kappa)$ such that
\begin{align*}
    \alpha-\beta_1=dh=d_{(1)}h+d_{(2)}h\ \longleftrightarrow \ \alpha=d_{(1)}h\in K_{(1)}^1\text{ and }\beta_1=d_{(2)}h\in K_{(2)}^1\,.
\end{align*}
We have therefore shown that for any $\alpha\in Z_3^{1,0}$, there exists $h\in K_{(0)}^0$ such that $d_{(1)}h=\alpha$, that is $Z_3^{1,0}=B_3^{1,0}$.\\
$\mathbf{p=3}$\\
By definition, we have that $\alpha\in Z_3^{3,-1}$ if $d_0\alpha=0$ and there exist $\beta_1\in K_{(4)}^2$ and $\beta_2\in K_{(5)}^2$ such that $d_{(1)}\alpha=d_0\beta_1$ and $d_{(2)}\alpha=d_0\beta_2+d_{(1)}\beta_1$. Again, since $K_{(4)}^2=K_{(5)}^2=0$ and $d_0\alpha=0$ for any $\alpha\in K_{(3)}^2$, we have that $\alpha\in Z_3^{3,-1}$ if $d_{(1)}\alpha=0$. In other words, $Z^{3,-1}_3=Z^{3,-1}_2$ is the space of closed forms in $K_{(3)}^2$. On the other hand, we have $\beta\in B^{3,-1}_3$ if there exist $\gamma_i\in K_{(3-i)}^1$ with $i=0,1,2$ such that $\beta=d_0\gamma_0+d_{(1)}\gamma_1+d_{(2)}\gamma_2$ and $d_0\gamma_1+d_{(1)}\gamma_2=d_0\gamma_2=0$. Since $K_{(3)}^1=0$, we have
\begin{align*}
    \beta\in B^{3,-1}_3\longleftrightarrow \exists\,\gamma_1\in K_{(2)}^1,\,\gamma_2\in K_{(1)}^1\text{ s.t. } \beta = d_{(1)}\gamma_1+d_{(2)}\gamma_{2}\text{ and }d_0\gamma_1+d_{(1)}\gamma_2=d_0\gamma_2=0\,.
\end{align*}We now want to show that $B_3^{3,-1}=Z_3^{3,-1}$, and hence $E^{3,-1}_3\cong 0$. Since $H^2(\Omega^\bullet(G_\kappa),d)=0$ for any $\kappa$, we have that any $\alpha\in Z_3^{3,-1}$ is a closed and hence exact form, that is $\exists\,\beta=\beta_1+\beta_2\in K^1$ such that $d\beta=\alpha$. Here, we are taking $\beta_1\in K_{(1)}^1$ and $\beta_2\in K_{(2)}^1$, and so
\begin{align*}
    d\beta=d\beta_{(1)}+d\beta_{(2)}=d_{(1)}\beta_{(1)}+d_{(2)}\beta_{(1)}+d_{(0)}\beta_2+d_{(1)}\beta_2=\alpha\in K_{(3)}^2
\end{align*}
which implies that $d_{(1)}\beta_1+d_{0}\beta_2=0$ since it is an element of $ K_{(2)}$, and $d_0\beta_1=0$ since $d_0\colon K_{(1)}^1\to K_{(1)}^2$ is the zero map.
For any $\alpha\in Z_3^{3,-1}$, it is therefore sufficient to take $\gamma_2=\beta_1\in K_{(1)}^1$ and $\gamma_{1}=\beta_2\in K_{(2)}^1$ to show that $\alpha\in B^{3,-1}_3$.

$\mathbf{p=4}$\\
As already pointed out in the computations of the previous page, we have that $E_3^{4,-1}=E_3^{4,-1}\cong H^3(\Omega^\bullet(G_\kappa),d)$.

\bibliographystyle{abbrv}
\bibliography{Bibliography}

\end{document}